
\documentclass[
a4paper,
oneside,
reqno,
]{amsart}

\usepackage{amsmath,amssymb,url}
\usepackage{enumitem} 

\usepackage{calc} 
\usepackage{geometry}
\usepackage[T1]{fontenc}
\usepackage{lmodern} 
\usepackage{lipsum}

\usepackage{mathrsfs}
\usepackage[usenames,dvipsnames]{xcolor}
\usepackage[
colorlinks=true,
]{hyperref}
\usepackage{algorithm}
\usepackage{algpseudocode}
\usepackage{tikz}

\numberwithin{equation}{section}

\theoremstyle{plain}

\theoremstyle{definition}



\DeclareMathOperator{\Forall}{\forall}
\DeclareMathOperator{\Exists}{\exists}
\DeclareMathOperator{\vg}{Var}

\newcommand{\cupz}{\mathbin{\mathring\cup}}
\DeclareMathOperator{\clo}{Clo}
\newcommand{\pr}{e}
\newcommand{\imt}{IMT}

\newcommand{\algb}[1]{\langle #1 \rangle}

\newcommand{\N}{\mathbb{N}}

\newcommand{\alg}[1]{\mathbf{#1}}

\DeclareMathOperator{\algfn}{\alg{FN}}

\newcommand{\rg}[1]{[#1]}

\newtheorem{thm}{Theorem}[section]

\newtheorem{lma}[thm]{Lemma}

\newcommand{\enumref}[1]{(\ref{#1})}

\newcommand{\edgerel}{\mathrel{E}}


\theoremstyle{definition}

\theoremstyle{remark}


\begin{document}


\title[Not all nilpotent monoids are finitely related]{Not all nilpotent monoids are finitely related}



\author{Markus Steindl}
\address{RISC Software GmbH\\4232 Hagenberg\\Austria}  %
\email{ma.steindl@gmail.com}


\thanks{Supported by the University of Colorado Boulder and the Austrian Science Fund (FWF) under grant no.~P27600}

\subjclass{20M07, 08A40, 20M05}

\keywords{Semigroup, Term functions, Clone, Finitely related, Finite degree, Nilpotent}

\begin{abstract}
A finite semigroup is finitely related (has finite degree) if its term functions are determined by a finite set of finitary relations.
For example, it is known that all nilpotent semigroups are finitely related.
A nilpotent monoid is a nilpotent semigroup with adjoined identity.
We show that every $4$-nilpotent monoid is finitely related.
We also give an example of a $5$-nilpotent monoid that is not finitely related.
To our knowledge, this is the first example of a finitely related semigroup where adjoining an identity yields a semigroup which is not finitely related.
We also provide examples of finitely related semigroups which have subsemigroups, homomorphic images, and in particular Rees quotients, that are not finitely related.
\end{abstract}

\maketitle


\section{Introduction}
\label{sec:intro}

The study of finitely related algebraic structures is an area of crucial interest in the field of universal algebra.
Historically, finite relatedness has its roots in the general theory of clones \cite{szendrei1986clones}. 
However, its significance became more apparent in the study of the computational complexity of the constraint satisfaction problem (CSP).
In particular, the so-called algebraic approach~\cite{bulatov2008recent} associates with each finite relational constraint language a universal algebra, and (up to term equivalence), the finitely related algebras are precisely the class of algebras that arise in this way.
The classification of finitely related algebras generating congruence distributive varieties \cite{B13} and eventually, congruence modular varieties \cite{Bar18}, were  landmark contributions in this regard.
For semigroups, extensive characterizations of finite relatedness were presented in~\cite{DJPS11, May13, Dol18}.
We build upon these foundational contributions in the present paper.

A \emph{clone} $C$ on a set $A$ is a subset of $\{ f \colon A^n \to A \mid n \in \mathbb{N} \}$ which contains all projections
$\pr_i^n \colon (x_1,\dotsc,x_n) \mapsto x_i$ for $1 \le i \le n$, and which is closed under composition, i.e.,\ for $n$-ary $f \in C$ and $k$-ary $g_1,\dotsc,g_n \in C$, the composition
\[ 
f(g_1,\dotsc,g_n) \colon A^k \to A, \quad x \mapsto f(g_1(x),\dotsc,g_n(x)) 
\]
is also in $C$.
For a set $A$ and $k \in \mathbb{N}$, a subset $R \subseteq A^k$ is a $k$-ary \emph{relation} on $A$.
We say $f \colon A^n \to A$ \emph{preserves} $R$ if $f(r_1,\dotsc,r_n) \in R$ whenever $r_1,\dotsc,r_n \in R$, where $f$ is applied componentwise. 
A clone $C$ is \emph{finitely related} if there are finitely many relations $R_1,\dotsc,R_m$ on $A$ such that the elements of $C$ are precisely the functions that preserve $R_1,\dotsc,R_m$.
Note that every clone on a finite set $A$ can be described by a (possibly infinite) set of finitary relations by a standard result from clone theory.
An \emph{algebraic structure} (or just \emph{algebra}) $\alg{A} = \algb{A,F}$ is a nonempty set $A$ together with a set $F$ of finitary functions on $A$.
For algebras and terms in general see~\cite{BS81}.
In the case of semigroups, a \emph{term} $t(x_1,\dotsc,x_n)$ is a word in the alphabet $x_1,\dotsc,x_n$. For a fixed algebra $\alg{A}$, a term $t(x_1,\dotsc,x_n)$ induces an $n$-ary \emph{term function} $t^{\alg{A}} \colon A^n \to A$ by evaluation.
The set of all finitary term functions on $\alg{A}$ forms the \emph{clone of term functions} of $\alg{A}$ and is denoted by $\clo \alg{A}$. An algebra $\alg{A}$ is \emph{finitely related} if $\clo \alg{A}$ is finitely related.

In~\cite{AMM14} Aichinger, Mayr, and McKenzie showed that every finite algebra with few subpowers is finitely related. 
An algebra $\alg{A}$ is said to have few subpowers if there is a polynomial $p$ such that $\alg{A}^n$ has at most $2^{p(n)}$ subalgebras. 
This includes all finite Mal'cev algebras such as finite groups and rings.
Conversely, Barto proved Valeriote's conjecture~\cite{Bar18}: 
Every finite, finitely related algebra which generates a congruence modular variety has few subpowers.
In~\cite{Mor19} Moore showed that the question whether a given finite algebra is finitely related is undecidable. 
As a result, characterizations based on finite relatedness gain greater significance.
For further developments on finitely related algebras see also~\cite{Aic10, MMM12, B13, G17, BB17, S19}.

The characterization of finitely related semigroups began in~\cite{DJPS11}.
Davey, Jackson, Pitkethly, and Szab\'o showed that nilpotent semigroups and commutative semigroups are finitely related. 
They also provided examples of finitely related algebras which have homomorphic images, subalgebras, and direct products, respectively, that are not finitely related.
Mayr~\cite{May13} proved that all Clifford semigroups, regular bands (a variety of idempotent semigroups), $3$-nilpotent monoids, and semigroups with a single idempotent element are finitely related.
He also gave the first known example of a semigroup that is not finitely related, the $6$-element Brandt monoid.
In~\cite{Dol18} Dolinka showed that all finite bands are finitely related.

In this paper, we address the following questions.
\begin{itemize}
\item Up to what $n \in \N$ are all $n$-nilpotent monoids finitely related?
\item Are all subsemigroups, homomorphic images, and direct products of a finitely related semigroup also finitely related?
\item If we adjoin an identity to a finitely related semigroup, is the resulting monoid finitely related?
\end{itemize}

\section{Results}

For a set $A$, $n \in \mathbb{N}$, and $f \colon A^n \to A$, we define new functions by identifying arguments. 
For $i,j \in \{1,\dots,n\}$ with $i < j$ let
\[
f_{ij} \colon A^n \to A, \quad 
(x_1,\dotsc,x_n) \mapsto f(x_1,\dots,x_{i-1},x_j,x_{i+1},\dots,x_n). 
\]
The function $f_{ij}$ is called an \emph{identification minor} of $f$.
For an algebraic structure $\alg{A}$ we say $f \colon A^n \to A$ has \emph{\imt} if every identification minor $f_{ij}$ is a term function of $\alg{A}$.

The following lemma reduces the question whether a finite semigroup is finitely related to a syntactic condition. 
Instead of explicitly listing the relations describing the clone of term functions, it suffices to show that all functions with \imt{} above a certain arity are term functions.

\begin{lma}[{\cite{J74,RS83}}]\label{lma:main_cond_fin_rel}
For a finite algebra $\alg{A}$ the following are equivalent:
\begin{enumerate}
\item\label{it1_lma:main_cond_fin_rel}
$\alg{A}$ is finitely related.
\item\label{it2_lma:main_cond_fin_rel}
$\Exists k \in \mathbb{N} \Forall n > k:$ every $f \colon A^n \to A$ with \imt{} is a term function of $\alg{A}$.
\end{enumerate}
\end{lma}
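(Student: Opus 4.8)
The plan is to route both implications through the standard clone-theoretic notion of \emph{finite degree}. Write $C := \clo \alg A$, let $\operatorname{Inv}_N C$ denote the set of all relations of arity at most $N$ that are preserved by $C$, and recall the Galois connection $\operatorname{Pol}$--$\operatorname{Inv}$ on the finite set $A$. I claim $A$ is finitely related if and only if $C = \operatorname{Pol}(\operatorname{Inv}_N C)$ for some $N$: if $C = \operatorname{Pol}(R_1,\dots,R_m)$ with all $R_i$ of arity at most $r$, then each $R_i \in \operatorname{Inv}_r C$, whence $C = \operatorname{Pol}(\operatorname{Inv}_r C)$; conversely $\operatorname{Inv}_N C$ is a finite set, since a finite set carries only finitely many relations of bounded arity. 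So it suffices to show that \ref{it2_lma:main_cond_fin_rel} is equivalent to finite degree, and the two directions will use different ideas.

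For \ref{it1_lma:main_cond_fin_rel}$\Rightarrow$\ref{it2_lma:main_cond_fin_rel} I would argue by a pigeonhole on the rows of a relation. Assume $C = \operatorname{Pol}(R_1,\dots,R_m)$ and set $k := \max_i |R_i|$. Given $n > k$ and an $f \colon A^n \to A$ with \imt, to place $f$ in $C$ it is enough to check that $f$ preserves each $R_i$. But if $r_1,\dots,r_n \in R_i$ then, since $n > |R_i|$, two of these tuples coincide, say $r_a = r_b$ with $a<b$; as the $a$-th and $b$-th columns agree, applying $f$ componentwise yields the same tuple as applying the identification minor $f_{ab}$, which is a term function and therefore preserves $R_i$. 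Hence $f(r_1,\dots,r_n)=f_{ab}(r_1,\dots,r_n)\in R_i$, and $f$ preserves $R_i$.

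For the converse \ref{it2_lma:main_cond_fin_rel}$\Rightarrow$\ref{it1_lma:main_cond_fin_rel} I would set $d := |A|$ and $N := d^{k}$ and prove $C = \operatorname{Pol}(\operatorname{Inv}_N C)$ by induction on the arity $n$ of an operation $g$ that preserves every relation in $\operatorname{Inv}_N C$, showing $g \in C$. For the base case $n \le k$ I use the \emph{graph relation} $\rho_n \subseteq A^{d^n}$ whose tuples are the value-vectors $(g'(\mathbf a))_{\mathbf a \in A^n}$ of the $n$-ary members $g' \in C^{(n)}$: it is preserved by $C$, its arity $d^n$ is at most $N$, and an $n$-ary operation preserves $\rho_n$ exactly when it lies in $C$ (evaluate the preservation condition on the tuples coming from the projections $\pr_1^n,\dots,\pr_n^n$). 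For the inductive step $n > k$ I reduce to \imt: each identification minor $g_{ij}$ inherits preservation of every relation $g$ preserves (replacing the $i$-th column by the $j$-th keeps all tuples in the relation), so the essentially $(n-1)$-ary operation underlying $g_{ij}$ satisfies the induction hypothesis and lies in $C$; thus every $g_{ij}$ is a term function, $g$ has \imt, and \ref{it2_lma:main_cond_fin_rel} forces $g \in C$.

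I expect the converse to be the main obstacle, and within it the real difficulty is not the induction itself but bounding the arity of the relations that describe $C$. Condition \ref{it2_lma:main_cond_fin_rel} only constrains operations, so the task is to extract from it a \emph{finite}, bounded-arity family of relations that already determines $C$. The graph relations $\rho_n$ supply concrete bounded-arity witnesses for small $n$, while the observation that identification minors inherit relation preservation is exactly what lets the induction push the \imt{} hypothesis down in arity without inflating the arity of the relations involved. Pinning the constant $N = d^{k}$ so that all the base-case graphs $\rho_n$ fit under it, and keeping the arity cutoff $k$ of \ref{it2_lma:main_cond_fin_rel} aligned with the induction, is the delicate bookkeeping I would need to get right.
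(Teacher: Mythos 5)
The paper does not prove this lemma---it is quoted from \cite{J74,RS83}---and your argument is a correct reconstruction of the standard proof from those sources: the pigeonhole on the rows of a bounded relation for \ref{it1_lma:main_cond_fin_rel}$\Rightarrow$\ref{it2_lma:main_cond_fin_rel}, and for the converse the $\operatorname{Pol}$--$\operatorname{Inv}$ reduction with the graph relations of arity $|A|^n$ handling arities up to $k$ and the minor-inheritance induction handling the rest. Both directions check out, including the minor points (identification minors of polymorphisms are polymorphisms, and deleting the resulting dummy variable preserves this), so there is nothing to flag.
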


\noindent
In~\cite{RS83} Rosenberg and Szendrei define the \emph{degree} of $\alg{A}$ to be the least $k$ that satisfies condition~
\enumref{it2_lma:main_cond_fin_rel} of Lemma~\ref{lma:main_cond_fin_rel}. The degree is finite if and only if $\alg{A}$ is finitely related.

For $d \in \mathbb{N}$ a semigroup $\alg{S}$ is \emph{$d$-nilpotent} if 
\[ \text{$x_1 \dotsm x_d = y_1 \dotsm y_d$\quad{}for all $x_1,\dotsc,x_d,y_1,\dotsc,y_d \in S$.}\]
$\alg{S}$ is \emph{nilpotent} if it is $d$-nilpotent for some $d \in \mathbb{N}$.
Every nilpotent semigroup has a unique zero element, denoted by $0$.
Every finite nilpotent semigroup $\alg{S}$ is $|S|$-nilpotent by~\cite[Theorem~3.3]{DJPS11}.
The following is a consequence of a result by Willard~\cite[Lemma 1.2]{W96}. 
%
%
\begin{thm}[{\cite[Theorem 3.4]{DJPS11}}]
Every finite nilpotent semigroup $\alg{S}$ is finitely related with degree at most $|S|+1$.
\end{thm}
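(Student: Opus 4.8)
The plan is to invoke Lemma~\ref{lma:main_cond_fin_rel} and reduce everything to a combinatorial statement about words. Since a finite nilpotent semigroup $S$ is $|S|$-nilpotent, I first record what its term functions look like: a term is a nonempty word, and evaluating a word of length at least $|S|$ always yields the absorbing zero $0$; hence the term functions of $S$ are exactly the constant map $0$ together with the \emph{word functions} $(a_1,\dotsc,a_n)\mapsto a_{k_1}\dotsm a_{k_\ell}$ arising from words $x_{k_1}\dotsm x_{k_\ell}$ of length $\ell<|S|$. By Lemma~\ref{lma:main_cond_fin_rel}, $S$ is finitely related with degree at most $|S|+1$ once we show: for every $n>|S|$, every $f\colon S^n\to S$ with \imt{} is a term function.

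So fix $n>|S|$ and $f$ with \imt. The key use of $n>|S|$ is pigeonhole: every tuple $a\in S^n$ has two equal coordinates $a_i=a_j$ with $i<j$. For such $a$ we have $f(a)=f_{ij}(a)$, and $f_{ij}$ is a term function by \imt. Thus $f$ is completely determined by its identification minors, and the problem becomes: produce a \emph{single} term function $g$ with $g_{ij}=f_{ij}$ for all $i<j$. Indeed, once such a $g$ exists, then for any $a$ with repeated pair $(i,j)$ we get $g(a)=g_{ij}(a)=f_{ij}(a)=f(a)$, and since every tuple has such a pair this gives $g=f$.

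To build $g$ I would argue as follows. If every minor $f_{ij}$ is the constant $0$, then $f\equiv 0$ and we are done. Otherwise some minor $f_{pq}$ is a nonconstant word $u$ of length $\ell<|S|<n$; note that $u$ involves at most $\ell$ of the variables, so many coordinates do not occur in it. The identification minors of a word $g$ are precisely the substitution instances $g[x_i\mapsto x_j]$, so the task is to reverse-engineer $g$ from the family $\{f_{ij}\}$: one recovers which occurrences of a letter in $u$ originated from a distinct letter of $g$ by inspecting further minors, using the unused coordinates (guaranteed by $n>|S|$) as probes to separate letters. The consistency that makes this reconstruction possible is that the family $\{f_{ij}\}$ is coherent, since the double identification minors of $f$ satisfy the obvious relations, e.g. $(f_{ij})_{kl}=(f_{kl})_{ij}$ for disjoint pairs; this coherence, together with the fact that any product of length $\ge|S|$ collapses to $0$, pins down both the length and the letters of $g$ and forces the constant-$0$ minors to be compatible with $g$.

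The main obstacle is exactly this reconstruction step. The subtlety is that identifying two variables that both occur in the underlying word changes the word (for instance, identifying the two letters of $x_1x_2$ produces $x_2x_2$), so the minors are genuinely different words and $g$ cannot simply be read off from any single one of them; one must amalgamate the information from several minors while controlling the interaction with the absorbing zero, and it is precisely here that having strictly more coordinates than $|S|$ elements provides the room needed. This mirrors the reduction underlying Willard's Lemma.
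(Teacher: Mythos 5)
First, a point of comparison: the paper does not prove this statement at all. It is quoted from \cite[Theorem~3.4]{DJPS11} and noted to be a consequence of Willard's lemma \cite[Lemma~1.2]{W96}, so there is no in-paper proof to measure you against; your argument has to stand on its own. Its framing is the standard one and is fine as far as it goes: by Lemma~\ref{lma:main_cond_fin_rel} one fixes $f$ with \imt{} of large arity $n$, observes that $n>|S|$ forces every tuple to have a repeated coordinate, so $f$ is determined by its identification minors, and reduces to producing a single term $g$ with $g_{ij}=f_{ij}$ for all $i<j$. You also correctly describe the term functions of a nilpotent semigroup as the constant $0$ together with word functions of length less than $|S|$.

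The proposal stops, however, exactly where the theorem begins. Constructing $g$ from the family $\{f_{ij}\}$ is the entire content of the result, and you only assert that ``coherence'' of the minors pins $g$ down. That cannot be taken for granted: the compatibility relations you cite, such as $(f_{ij})_{k\ell}=(f_{k\ell})_{ij}$, hold for \emph{every} function with \imt{}, including the non-term-functions built in Theorem~\ref{thm:FN5_factor_not_fr} and on the $6$-element Brandt monoid; whatever is special about nilpotent semigroups must enter precisely in the step you wave at, and it does not. Concretely, three things are missing. (i) A non-constant word function of length $<|S|$ depends on fewer than $|S|$ variables, so you must show that a non-constant $f$ with \imt{} depends on fewer than $|S|$ of its $n$ variables; this is where Willard's essential-arity lemma actually does work, and it is not a formal consequence of coherence. (ii) Your ``probing with unused coordinates'' has no meaning in a semigroup without identity: unlike the monoid arguments elsewhere in this paper, you cannot set a coordinate to $1$ to delete it from a product, and any evaluation touching at least $|S|$ letters collapses to $0$, so you must say which evaluations recover the positions of the letters of $g$ inside the words inducing the various $f_{ij}$. (iii) The case where some minors are the constant $0$ and others are not must be reconciled with a single word $g$. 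A smaller issue: you reduce to arities $n>|S|$ rather than $n>|S|+1$, i.e.\ you implicitly claim degree at most $|S|$; the counterexample following Lemma~\ref{lma:imt_sg_sl} shows that the auxiliary lemmas of this kind genuinely fail at arity $|S|+1$, so you should not expect that extra room to come for free.
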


A \emph{$d$-nilpotent monoid} is a $d$-nilpotent semigroup with adjoined identity.
Mayr showed in~\cite{May13} that every finite $3$-nilpotent monoid $\alg{S}$ is finitely related with degree at most $\max(|S|+1,4)$. We strengthen his result as follows.
\begin{thm}\label{thm:4nilp_fin_rel}
Every finite $4$-nilpotent monoid $\alg{S}$ is finitely related with degree at most $|S| + 1$.
\end{thm}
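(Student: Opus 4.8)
The plan is to apply Lemma~\ref{lma:main_cond_fin_rel}, so I would set $k = |S|$ and show that for every $n > |S|$, any $f \colon S^n \to S$ with \imt{} is a term function. Here $S = N^1$ where $N$ is a $4$-nilpotent semigroup, so $S = N \cup \{1\}$ with $1$ an adjoined identity. Since $N$ is $4$-nilpotent, every product of four or more elements of $N$ equals $0$, and because the identity $1$ simply disappears from any word, the nonzero term functions on $S$ are exactly the ones that, on each input, read off at most three coordinates carrying nonidentity values and multiply them in order. I would first set up this combinatorial description of term functions precisely: a term $t$ is a word in the variables $x_1,\dots,x_n$, and $t^{\alg S}(a)$ is obtained by deleting all coordinates where $a_i = 1$ and then multiplying the surviving letters in $N$; the result is $0$ unless at most three distinct variable-occurrences survive in a ``short'' word of length $\le 3$.

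\medskip
Next I would reduce the problem using the identity $1$. The key structural fact is that for a function $f$ with \imt{}, its behaviour is tightly constrained by what happens when most coordinates are set to $1$. Concretely, I would analyze $f$ by restricting to inputs $a \in S^n$ with a bounded number of nonidentity entries: if $a$ has support (set of coordinates $i$ with $a_i \neq 1$) of size $\le 3$, then using the identifications and the fact that $n > |S|$ forces repetitions, I can pin down $f(a)$ against a candidate term function $t$ built to agree with $f$ on all small-support inputs. The construction of $t$ is the heart of the argument: I would read off, from the restrictions $f_{ij}$ and from evaluations of $f$ on inputs supported on two or three coordinates, a consistent word-pattern, checking that \imt{} guarantees these local data glue into a single globally defined term. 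The role of $n > |S|$ is to ensure that on any large-support input there are at least two coordinates carrying equal nonidentity values (or at least that some identification minor is available), which lets me reduce evaluation on large supports to evaluation on supports of size $\le 3$, where $N$ being $4$-nilpotent makes the value $0$ anyway.

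\medskip
The main obstacle I anticipate is controlling $f$ on inputs whose support has size exactly $2$ or $3$, where the value need not be $0$ and the order of multiplication matters. On support-$2$ and support-$3$ inputs a term function is sensitive to which variable comes first, so I must verify that the \imt{} data determine a \emph{consistent} left-to-right order; the danger is that $f$ could prescribe incompatible orderings on overlapping triples of coordinates, in which case no single word reproduces it. I would handle this by a careful case analysis showing that any such inconsistency would be detected by some identification minor $f_{ij}$ failing to be a term function. In particular, identifying two coordinates both carrying nonidentity values collapses support-$3$ behaviour to support-$2$ behaviour, and comparing these across different choices of $i,j$ propagates the ordering constraints; $4$-nilpotence keeps the analysis finite because products of length $\ge 4$ vanish, so only orderings of at most three letters are ever relevant. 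This is precisely the step that must fail for $5$-nilpotent monoids, consistent with the announced counterexample, so I expect this order-consistency verification to be where the bound $4$ is genuinely used.
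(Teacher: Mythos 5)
Your outline follows the same skeleton as the paper's proof: reduce to the syntactic condition of Lemma~\ref{lma:main_cond_fin_rel}, note that inputs with large support evaluate to $0$ by $4$-nilpotence combined with a forced repetition among the arguments, and assemble a candidate term from the behaviour of $f$ on inputs supported on two or three coordinates. But the proposal stops essentially where the real work begins, and it misidentifies where the difficulty sits. First, before any ordering argument makes sense you must pin down the \emph{multiplicity} with which each variable occurs in every term inducing a restriction of $f$; this is the content of Lemmas~\ref{lma:ei_exists} and~\ref{lma:FN_occurrences} (the variable exponents $e_i$), and it is what allows the variables to be split into those occurring once, twice, and at least three times. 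The exponent-$\ge 3$ variables can then be pulled out to the right (Lemma~\ref{lma:nilp_mon_IMT-fct_form}), since any product containing $x_i^3$ together with another nonidentity factor already vanishes. Second, and more importantly, you frame the consistency problem as choosing ``a consistent left-to-right order'' of the surviving letters. For the exponent-$1$ variables this is the easy half: each pairwise restriction is induced by $x_ix_j$ or $x_jx_i$, transitivity follows from a three-variable restriction, and one obtains a word $x_1\dotsm x_r$. The genuinely hard half, which your proposal does not isolate, is interleaving the \emph{two} occurrences of each exponent-$2$ variable $x_m$ into that word: the restriction $f(\bar 1,\underset{i}{x_i},\bar 1,\underset{m}{x_m},\bar 1)$ is induced by one of $x_ix_m^2$, $x_mx_ix_m$, $x_m^2x_i$, and these data for all $i\in\rg{r}$ must be realized by a \emph{single} placement of the pair of $x_m$'s. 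Whether this is possible depends on which of the identities $xy^2\approx yxy\approx y^2x$ hold in $S$; the paper needs a five-way case split on these identities together with monotonicity claims (e.g.\ if the $(i,m)$-pattern is $x_mx_ix_m$ and $i<j$, then the $(j,m)$-pattern is $x_mx_jx_m$ or $x_m^2x_j$) to show that the local data glue. Asserting that ``any inconsistency would be detected by some $f_{ij}$ failing to be a term function'' is the right intuition, but it is exactly the claim that has to be proved, and it occupies most of the argument.

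A smaller but genuine issue: you propose to verify the condition of Lemma~\ref{lma:main_cond_fin_rel} for all $n>|S|$, whereas the supporting machinery (that identification minors still depend on the relevant variable, hence that the exponents $e_i$ exist) requires $n>|S|+1$; the paper exhibits a two-element example showing that the threshold $|S|$ is too low for that dependency lemma. Since the theorem only claims degree at most $|S|+1$, you should work with $n>|S|+1$ throughout.
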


A subsemigroup $\alg{I}$ is an \emph{ideal} of $\alg{S}$ if for all $i \in I$ and $s \in S$, both $s i \in I$ and $i s \in I$.
Every ideal $\alg{I}$ of $\alg{S}$ induces a congruence $\theta = (I \times I) \cup \{(s,s) \mid s \in S\}$ which identifies the elements in $I$. We refer to the quotient $\alg{S} \mathbin{/} \theta$ as the \emph{Rees quotient} of $\alg{S}$ by $\alg{I}$.

For a nonempty set $A$, the notation $\alg{A}^+$ denotes the \emph{free semigroup} on $A$.
For $d \in \mathbb{N}$, we define the \emph{free $d$-nilpotent semigroup} on $A$, denoted by $\algfn_d A$, as the Rees quotient of $\alg{A}^+$ with respect to the ideal
\[ I = \{ u \in A^+ \mid \text{the length of \( u \) is at least \( d \)} \}. \]

If we adjoin an identity to a free nilpotent semigroup, then the resulting monoid is also finitely related:

\begin{thm}\label{thm:FN1_finitely_related}
For a nonempty finite set $A$ and $d \in \mathbb{N}$, $\alg{S} := (\algfn_d A)^1$ is finitely related with degree at most $|S| + 1$.
\end{thm}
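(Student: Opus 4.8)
The plan is to verify the syntactic criterion of Lemma~\ref{lma:main_cond_fin_rel}. Fix $n > |S| + 1$ (this is the range of arities the asserted degree bound $|S|+1$ requires) and an arbitrary $f \colon S^n \to S$ with \imt{}; I must produce a term $t$ with $f = t^S$. The starting point is a pigeonhole observation: since $n > |S|$, every tuple $a = (a_1,\dotsc,a_n) \in S^n$ has two equal coordinates, say $a_i = a_j$ with $i < j$. For such a tuple $f(a) = f_{ij}(a)$, and by \imt{} the identification minor $f_{ij}$ is a term function. Thus on each ``diagonal'' $D_{ij} := \{ a \in S^n : a_i = a_j \}$ the function $f$ coincides with a term function, and these diagonals cover all of $S^n$. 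The entire task is to promote this family of local term representations to a single global term.

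Next I record how term functions of $S = (\fn_d A)^1$ behave. Representing the elements of $\fn_d A$ as the words over $A$ of length $1,\dotsc,d-1$ together with the zero $0$, a term $t = x_{i_1}\dotsm x_{i_m}$ sends $(a_1,\dotsc,a_n)$ to the product $a_{i_1}\dotsm a_{i_m}$ formed in $S$: each argument equal to $1$ is erased, an argument equal to $0$ at a variable occurring in $t$ forces the value $0$, and otherwise the remaining words are concatenated in order, collapsing to $0$ as soon as the total length reaches $d$. Freeness of $\fn_d A$ makes these functions rigid: equality of two term functions of $S$ can be tested on the tuples whose entries are $1$ or single letters of $A$, and on such a tuple of small support the value literally spells out the scattered subword that the variable pattern $(i_1,\dotsc,i_m)$ induces on the supporting coordinates. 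I use this to build a candidate term. The small-support single-letter tuples all contain many coordinates equal to $1$, hence lie in the covered region, so the corresponding values of $f$ are known; reading them off for singletons yields the multiplicity of each variable, with the proviso that multiplicity at least $d$ is only visible as ``overflow'', and reading them off for pairs yields the relative order of the variables. Assembling this data gives a candidate word $t$.

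Finally I verify $f = t^S$ everywhere. Given any $a \in S^n$, a coincidence $a_i = a_j$ gives $f(a) = f_{ij}(a) = t_{ij}^S(a)$, where $t_{ij}$ is a term representing the minor; it therefore suffices to show, for every pair $i<j$, that $t_{ij}^S$ and the candidate $t^S$ agree on $D_{ij}$, and by the rigidity above this reduces to comparing their patterns on small-support single-letter tuples. \textbf{The main obstacle is exactly this consistency step}: I must show that the pairwise orders and multiplicities extracted from the different identification minors are mutually compatible and really glue to the one pattern of $t$ --- including reconciling how each minor perceives a variable of high multiplicity and how the absorbing element $0$ propagates through a product. Here freeness is decisive, since it forbids accidental coincidences between distinct patterns and forces each minor to encode an honest scattered-subword of the global pattern, so that overlapping diagonals pin the orders down transitively. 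The slack in the hypothesis $n > |S|+1$, rather than merely $n > |S|$, supplies the spare coordinates needed to locate such coincidences away from the support of the tuples being probed, both when constructing $t$ and when checking the gluing.
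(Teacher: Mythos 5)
Your overall strategy coincides with the paper's: reduce to the criterion of Lemma~\ref{lma:main_cond_fin_rel}, extract the multiplicity of each variable from the minors (the variable exponents of Lemma~\ref{lma:ei_exists}) and the relative order of occurrences from the binary restrictions $f(\bar 1,\underset{i}{x_i},\bar 1,\underset{j}{x_j},\bar 1)$, assemble a single word, and verify it on every diagonal. But the step you yourself flag as ``the main obstacle'' --- that the pairwise order data is globally consistent --- is exactly the substantive content of the proof, and your appeal to ``freeness forbids accidental coincidences \dots so that overlapping diagonals pin the orders down transitively'' does not establish it. Two concrete issues. First, the binary restriction determines the interleaving of the occurrences of $x_i$ and $x_j$ only when $e_i+e_j<d$ (Lemma~\ref{lma:FN_unique_term}); when $e_i+e_j\ge d$ every permutation of the inducing word gives the same function, so the order information you can read off is only a partial relation on the set of occurrences. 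Second, and more seriously, pairwise consistency of such a family does not imply the absence of a directed cycle in its union: a cycle can pass through occurrences of many distinct variables with each consecutive pair individually consistent. Ruling this out is where the work is. The paper takes a minimal cycle $(x_{i_1\alpha_1},\dotsc,x_{i_m\alpha_m},x_{i_1\alpha_1})$, shows the indices $i_1,\dotsc,i_m$ are distinct, and splits into cases: if $m\le n-2$ one can identify two unused coordinates, so $f(x_1,\dotsc,x_m,\bar 1)$ is induced by a term whose internal variable order refutes the cycle; if $m\ge n-1$ no spare coordinates remain, and the cycle is instead contracted by an arithmetic argument (choosing $i$ with $e_i$ maximal, the inequalities $e_{i\ominus 1}+e_i<d$, $e_i+e_{i\oplus 1}<d$, and $e_{i\ominus 1}+e_{i\oplus 1}\le 2e_i$ combine to give $e_{i\ominus 1}+e_{i\oplus 1}<d$, so the two neighbours of $i$ are directly comparable and the cycle shortens). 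Your remark that the slack in $n>|S|+1$ supplies spare coordinates addresses only the first case; the second is precisely the situation in which that slack is unavailable.

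A smaller omission: after acyclicity one still needs a topological sort (Lemma~\ref{lma:cycle-free_digraph}) to produce the word $t$, and the final verification must treat separately the tuples whose support has total exponent at least $d$, where both $t$ and the relevant minors evaluate to $0$ by $d$-nilpotence and Lemma~\ref{lma:FN_occurrences}, so that the unconstrained placement of high-multiplicity variables is harmless. You gesture at both points but carry out neither. As it stands the proposal is a correct plan with the central lemma missing.
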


The following nilpotent monoid is not finitely related.

\begin{thm}\label{thm:FN5_factor_not_fr}
Let $\alg{S} := (\algfn_5 \{a,b\} \mathbin{/}\theta)^1$, where $\theta$ is the congruence whose equivalence classes are $\{ (ab)^2, (ba)^2 \}$, $\{ a^2b^2, b^2a^2 \}$, and the rest singletons.
Then $\alg{S}$ is not finitely related.
\end{thm}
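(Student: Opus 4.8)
The strategy is to apply Lemma~\ref{lma:main_cond_fin_rel} in its contrapositive form: were $S$ finitely related, its degree would be some finite $k$, and then every function of arity $n>k$ with \imt{} would be a term function. Hence it suffices to exhibit, for arbitrarily large $n$, an $n$-ary function $f_n\colon S^n\to S$ that has \imt{} but is \emph{not} a term function, which forces the degree above every bound and yields the claim. I would construct one such witness $f_n$ in each sufficiently large arity.

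Recall that in $S$ every product of five or more generators is $0$, so the nonzero length-$4$ words form the top nonzero layer, and $\theta$ acts only there, identifying $abab$ with $baba$ and $aabb$ with $bbaa$. I would therefore concentrate $f_n$ on the \emph{weight-$4$} tuples — those with exactly four coordinates carrying values in $\{a,b\}$ and all remaining coordinates equal to the monoid identity $1$ — and set $f_n=0$ on every other tuple (in particular whenever some coordinate is $0$, or the number of active coordinates differs from $4$), matching the behaviour of any long enough word. On a weight-$4$ tuple with active positions $p_1<p_2<p_3<p_4$ carrying values $u_1,u_2,u_3,u_4$, I would let $f_n$ output a length-$4$ product of the $u_i$ whose factor order depends on a combinatorial predicate of the position set $\{p_1,p_2,p_3,p_4\}$, alternating between the identity arrangement $u_1u_2u_3u_4$ and the half-swap $u_3u_4u_1u_2$. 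The half-swap is chosen precisely because it is invisible under $\theta$ on the patterns $(a,b,a,b)$ and $(a,a,b,b)$ and their letter swaps, as $abab=baba$ and $aabb=bbaa$, yet it changes the $\theta$-class on patterns such as $(a,b,b,a)$, where $abba\ne baab$.

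The design goal of the predicate is to make these two features coexist. For non-termness, suppose $f_n=t^S$ for a word $w$. To match $f_n$ on a weight-$4$ tuple, $w$ must contribute each of the four active variables exactly once (otherwise the output would fail to depend on one of $u_1,\dots,u_4$), so $w$ fixes the relative order of any two active variables independently of the remaining active coordinates. Feeding in a value pattern outside the $\theta$-collapsing ones, e.g.\ $(a,b,b,a)$, I would produce two active sets sharing a pair $p<q$ on which $f_n$ prescribes $\theta$-distinct, oppositely ordered outputs, contradicting the existence of $w$. For \imt{}, the predicate is arranged so that forming any identification minor $f_{ij}$ (setting $x_i=x_j$) collapses the configuration space: on the diagonal a weight-$4$ tuple can no longer realize both arrangements for the relevant four-subsets, so $f_{ij}$ agrees with the product given by a single fixed word and is a term function. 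Verifying this for every pair $(i,j)$, including pairs forcing a coordinate inactive, and checking agreement on all lower-weight and $0$-containing tuples, is the bulk of the routine work. Since such non-term \imt{} functions occur in every large arity, the degree of $S$ is infinite and, by Lemma~\ref{lma:main_cond_fin_rel}, $S$ is not finitely related.

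The crux — and the main obstacle — is pinning down a position predicate for which the two opposing requirements provably hold at once: globally the induced factor orders must fail to come from any single word even up to $\theta$ (so $f_n$ is not a term), while every diagonal restriction becomes $\theta$-consistent with one word (so \imt{} holds). This is exactly the balance that the two relations $abab=baba$ and $aabb=bbaa$ are tuned to permit, being large enough to repair each identification minor but too small to repair the global function; establishing a predicate that achieves both uniformly across all arities and all identification pairs is the delicate part.
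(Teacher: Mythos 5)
There is a genuine gap, and it is fatal to the construction as described before one even reaches the unspecified ``position predicate.'' You set $f_n=0$ on every tuple whose number of active coordinates differs from $4$; in particular $f_n(1,\dotsc,1)=0$ and $f_n(\bar 1,\underset{k}{a},\bar 1)=0$. But every identification minor satisfies $f_{ij}(1,\dotsc,1)=f_n(1,\dotsc,1)$, and every semigroup word evaluates to $1$ at the all-ones tuple of the monoid $S$, so no identification minor of your $f_n$ can be a term function: \imt{} fails outright. Even if you patch the all-ones value to $1$, the weight-$1$ values force any word $w$ inducing a minor $f_{ij}$ to contain each variable $x_k$ ($k\neq i,j$) at least $5$ times (since $a^m\neq 0$ in $\fn_5\{a,b\}/\theta$ for $m\le 4$), and then $w$ evaluates to $0$ on every weight-$4$ tuple by $5$-nilpotence, contradicting the nonzero length-$4$ outputs you prescribe there. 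The underlying problem is that your value assignment across weights is not consistent with \emph{any} fixed multiset of variable occurrences, so the minors cannot be words no matter how the predicate is chosen. The paper's construction avoids exactly this: it puts the combinatorial content on tuples with at most \emph{two} non-identity coordinates, gives every variable exponent $2$ (so $f(\bar 1)=1$, $f(\bar1,\underset{i}{x_i},\bar1)=x_i^2$, and $\ge 3$ active coordinates already force $\ge 6>5$ factors, hence $0$), and uses cyclic adjacency of $(i,j)$ in $\rg{n}$ as the predicate, with $(x_ix_j)^2$ on adjacent pairs and $x_i^2x_j^2$ otherwise.

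Second, the part you yourself flag as ``the crux'' --- exhibiting a predicate for which \imt{} provably holds while global termness fails --- is precisely what is missing, and it is the entire content of the proof; the verification of \imt{} is not ``routine'' but requires producing explicit witnessing words. In the paper these are the interleaved words $g_{ij}=x_ix_{i\oplus1}x_ix_{i\oplus2}x_{i\oplus1}\dotsm$, assembled into $r=x_j^4\,g_{j\oplus1,i\ominus1}\,g_{i+1,j-1}$, whose correctness rests on the two identities $(xy)^2\approx(yx)^2$ and $x^2y^2\approx y^2x^2$. Your high-level intuition about why $\theta$ is ``large enough to repair each minor but too small to repair the global function'' matches the paper's, but without a concrete, weight-consistent definition of $f_n$ and explicit terms for the minors, the proposal does not constitute a proof.
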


We can now answer the questions from the introduction.
The $5$-nilpotent semigroup $\algfn_5 \{a,b\} \mathbin{/}\theta$ is finitely related. 
Adjoining an identity yields a $5$-nilpotent monoid that is not finitely related.
In addition, $(\algfn_5 \{a,b\})^1$ is finitely related by Theorem~\ref{thm:FN1_finitely_related}. 
Its homomorphic image $(\algfn_5 \{a,b\} \mathbin{/}\theta)^1$ is not finitely related. 
To the best of our knowledge, these are the first examples of such semigroups.

For a signature $\mathcal{F}$ and terms $s(x_1,\dotsc,x_n)$ and $t(x_1,\dotsc,x_n)$ over $\mathcal{F}$ an \emph{identity} is an expression of the form $s\approx t$.
An algebra $\alg{A}$ over $\mathcal{F}$ \emph{satisfies} the identity $s \approx t$ if $s^{\alg{A}}(a_1,\dotsc,a_n) = t^{\alg{A}}(a_1,\dotsc,a_n)$ for all $a_1,\dotsc,a_n \in A$.
A class $V$ of algebras over a signature $\mathcal{F}$ is called a \emph{variety} if
there is a set $\Sigma$ of identities over $\mathcal{F}$ such that $V$ contains precisely the algebras that satisfy all identities in $\Sigma$.
By Birkhoff's HSP theorem a class of algebras is a variety if and only 
if it is closed under homomorphic images, subalgebras, and direct products.
For a variety $V$, $V_\textnormal{fin}$ denotes the class that contains all finite members of $V$.

\begin{thm}\label{thm:fin_rel_propagation_sg}
For a variety $V$ of semigroups, the following are equivalent:
\begin{enumerate}
\item\label{it1_thm:fin_rel_propagation_sg}
$\Forall \alg{S}, \alg{T} \in V_\textnormal{fin}$: if $\alg{S} \times \alg{T}$ is finitely related (f.r.), then $\alg{S}$ is f.r.
\item\label{it2_thm:fin_rel_propagation_sg}
$\Forall \alg{S} \in V_\textnormal{fin}$: if $\alg{S}$ is f.r., then every homomorphic image of $\alg{S}$ is f.r.
\setcounter{enumi}{2}
\item\label{it3_thm:fin_rel_propagation_sg}
$\Forall \alg{S} \in V_\textnormal{fin}$: if $\alg{S}$ is f.r., then every subsemigroup of $\alg{S}$ is f.r.
\item\label{it4_thm:fin_rel_propagation_sg}
$\Forall \alg{S} \in V_\textnormal{fin}$: if $\alg{S}$ is f.r., then every Rees quotient of $\alg{S}$ is f.r.
\end{enumerate}
\end{thm}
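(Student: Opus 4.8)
The plan is to prove the four statements equivalent by establishing $(2)\Rightarrow(1)$, $(3)\Rightarrow(1)$, and $(2)\Rightarrow(4)$ directly, and then to climb back from $(1)$ to the closure properties with the help of a single product construction. Since $V$ is closed under homomorphic images, subsemigroups, and direct products, every semigroup I construct stays in $V_\textnormal{fin}$, so I may argue contrapositively throughout: a witness to the failure of one condition gets converted into a witness to the failure of another.

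The easy directions rest on the observation that a factor of a product is a \emph{retract}. For $(2)\Rightarrow(1)$, if $\alg S\times\alg T$ is f.r.\ then its homomorphic image $\alg S=\pi_{\alg S}(\alg S\times\alg T)$ is f.r.\ by $(2)$. For $(3)\Rightarrow(1)$, choose an idempotent $e\in T$ (it exists since $T$ is finite and nonempty); then $S\times\{e\}$ is a subsemigroup of $\alg S\times\alg T$ isomorphic to $\alg S$, since $(s_1,e)(s_2,e)=(s_1s_2,e)$, so $(3)$ gives that $\alg S$ is f.r. Finally $(2)\Rightarrow(4)$ is immediate, because a Rees quotient is a homomorphic image.

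The core of the argument is an \emph{engine lemma}: if $\alg S$ is finitely related and $\alg B$ is either a homomorphic image or a subsemigroup of $\alg S$, then the product $\alg S\times\alg B$ is finitely related. Granting this, $(1)\Rightarrow(2)$ and $(1)\Rightarrow(3)$ follow at once: given $\alg S$ f.r.\ and a quotient (resp.\ subsemigroup) $\alg B$, the engine lemma makes $\alg S\times\alg B$ finitely related, whence $(1)$ forces the factor $\alg B$ to be f.r. This already yields $(1)\Leftrightarrow(2)\Leftrightarrow(3)$. To prove the engine lemma I would invoke the syntactic criterion of Lemma~\ref{lma:main_cond_fin_rel}: the term functions of $\alg S\times\alg B$ are exactly the pairs $(t^{\alg S},t^{\alg B})$ induced by a common term $t$, and because $\alg B$ is a quotient (or subalgebra) of $\alg S$ its term functions are tightly coupled to those of $\alg S$. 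Using the finite degree of $\alg S$, one bounds the arity above which every function on $\alg S\times\alg B$ with \imt\ must coincide with such a pair and hence be a term function.

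It remains to fit $(4)$ into the cycle. Since $(2)\Rightarrow(4)$ is done, I only need to climb back from $(4)$, equivalently to convert a witness of $\neg(2)$ into a witness of $\neg(4)$: from a finitely related $\alg S$ with a homomorphic image $\alg T=\alg S/\theta$ that is not f.r., I must produce a finitely related semigroup in $V_\textnormal{fin}$ having a \emph{Rees} quotient that is not f.r. This is the step I expect to be the main obstacle, since an arbitrary congruence $\theta$ is far from a Rees congruence. The plan is to realise $\theta$ as a Rees collapse inside a larger semigroup: I would form an ideal extension of a null (hence nilpotent) semigroup by $\alg T$, or equivalently pass to a product furnished by the engine lemma and adjoin a zero so that the relevant classes become a single ideal, while the resulting Rees factor is the non-f.r.\ semigroup $\alg T$. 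The finite relatedness of the enlarged semigroup should again be controlled by the engine lemma together with the fact that adjoining an isolated zero and collapsing a nilpotent ideal preserve finite relatedness. Verifying that such a construction stays inside $V$ and genuinely presents $\alg T$ as a Rees quotient is the delicate point; once it is in place, $\neg(2)\Rightarrow\neg(4)$ closes the equivalence.
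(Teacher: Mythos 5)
Your outer architecture is sound and largely matches the paper's: the implications $(2)\Rightarrow(1)$ (projection), $(3)\Rightarrow(1)$ (embed $S\cong S\times\{e\}$ via an idempotent $e\in T$), and $(2)\Rightarrow(4)$ (a Rees quotient is a homomorphic image) are exactly the paper's easy steps. Your ``engine lemma'' is also correct, but you should not try to prove it by counting occurrences in identification minors: it is precisely the statement that $S$ and $S\times B$ generate the same variety when $B$ is a quotient or subsemigroup of $S$, combined with the theorem that two finite algebras generating the same variety are simultaneously finitely related or not (Theorem~\ref{thm:gen_same_variety}, proved independently in \cite{DJPS11} and \cite{MMM12}). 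That theorem is a substantive result whose proof goes through the relational/clone-theoretic description, not through an arity bound for \imt{} functions on the product; your sketch of it is not a proof, so cite it instead.

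The genuine gap is the return path from $(4)$. You aim at $\neg(2)\Rightarrow\neg(4)$, i.e.\ at realising an \emph{arbitrary} congruence $\theta$ on $S$ as a Rees collapse; this is the wrong target, and your proposed ideal extension is not specified in a way that can be checked. The paper instead proves $(4)\Rightarrow(3)$, and the missing construction is the \emph{$0$-direct union}: for a subsemigroup $T$ of a finitely related $S$, form $S\cupz T$ on the universe $S^0\cup T^0$ with all cross products equal to $0$. One checks (Lemma~\ref{sg_vars_gen}) that $S\cupz T$, $S^0\times T^0$, and $(S\times T)^0$ generate the same variety; since $S\times T$ is finitely related (same variety as $S$) and adjoining a zero preserves finite relatedness (Theorem~\ref{thm:adj_zero_fin_rel}), $S\cupz T$ is finitely related by Theorem~\ref{thm:gen_same_variety} (this is Lemma~\ref{lma:zerodirect_union_fin_rel}). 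Now $S^0$ is an ideal of $S\cupz T$ whose Rees quotient is $T^0$, so $(4)$ gives that $T^0$, and hence $T$, is finitely related. Since $(3)\Rightarrow(1)\Leftrightarrow(2)\Rightarrow(4)$ you have already, this closes the cycle. Your instinct that the delicate point is whether the enlarged semigroup stays in $V$ and genuinely presents the bad semigroup as a Rees quotient is correct --- the $0$-direct union is the device that makes the second half work --- but without it your argument for incorporating $(4)$ does not go through.
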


\noindent
The varieties $V$ that satisfy the conditions in Theorem~\ref{thm:fin_rel_propagation_sg} include those of commutative semigroups, bands, and $d$-nilpotent semigroups for a given $d \in \N$.
Conversely, the variety generated by $(\algfn_5 \{a,b\})^1$ does not meet these conditions by Theorem~\ref{thm:FN5_factor_not_fr}.
As a consequence of Theorem~\ref{thm:fin_rel_propagation_sg}, there exist finitely related semigroups with subsemigroups and Rees quotients that are not finitely related.
See Lemma~\ref{lma:examples} for an example.


\section{Proofs}

\begin{thm}[{\cite[Theorem~3.6]{DJPS11}}]\label{thm:comm_monoid_fin_rel}
Every finite commutative monoid $\alg{S}$ is finitely related with degree at most $\max(|S|,3)$.
\end{thm}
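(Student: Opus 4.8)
The plan is to apply Lemma~\ref{lma:main_cond_fin_rel}: it suffices to show that for every $n > \max(|S|,3)$, each $f\colon S^n \to S$ with \imt{} is a term function. The starting point is that the term functions of a commutative monoid are precisely the monomial maps $(x_1,\dotsc,x_n) \mapsto x_1^{e_1}\dotsm x_n^{e_n}$ with $e_1,\dotsc,e_n \ge 0$, since commutativity lets us sort any monoid word and the constant $1$ absorbs the variables with exponent $0$. So the goal is to produce exponents $e_1,\dotsc,e_n$ with $f(a) = \prod_k a_k^{e_k}$ for all $a \in S^n$.

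The engine of the proof is the pigeonhole principle. Because $n > |S|$, every input $a \in S^n$ has two equal coordinates $a_i = a_j$, and for such $a$ the value $f(a)$ equals the identification minor $f_{ij}(a)$, which by \imt{} is a monomial. Thus $f$ agrees on each input with \emph{some} monomial, and the whole content of the theorem is that these local monomials glue into one global monomial. First I would read off the candidate exponents from single-coordinate inputs: for $s \in S$ set $g_k(s) := f(1,\dotsc,1,s,1,\dotsc,1)$ with $s$ in position $k$. Since $n \ge 3$, this input has two identity coordinates $l,l'$ with $l,l' \ne k$; applying the minor that identifies $l$ and $l'$ shows $g_k(s) = s^{e_k}$ for a fixed exponent $e_k$, so $g_k$ is a power function and the $e_k$ are well defined.

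Next I would prove the key intermediate statement: \emph{if $a$ has at least two identity coordinates, then $f(a) = \prod_k a_k^{e_k}$}. Here one picks coordinates $l,l'$ with $a_l = a_{l'} = 1$ and writes $f(a) = t(a)$ for the monomial $t$ representing the minor that identifies $l$ and $l'$. The eliminated variable has exponent $0$, its partner sits at the identity and so contributes $1$, and probing $t$ at single-coordinate inputs shows that the exponent of every remaining variable agrees, as a power function, with the corresponding $e_k$; hence $t(a) = \prod_k a_k^{e_k}$. The remaining inputs have at most one identity coordinate, hence at least $n-1 \ge |S|$ non-identity coordinates, so pigeonhole again yields $l,l'$ with $a_l = a_{l'} = v \ne 1$. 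Using the minor that identifies $l$ and $l'$, the only exponent not yet matched is the ``cross exponent'' on the surviving variable, which is exactly where the argument is delicate.

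The main obstacle is this cross term: I must show that identifying two non-identity coordinates adds their exponents, i.e.\ that the exponent of the surviving variable in the minor identifying $l$ and $l'$ equals $e_l + e_{l'}$ as a power function. I would settle this by evaluating that minor at a single-coordinate input, which unfolds to $f$ evaluated at the support-$2$ input carrying value $s$ in both positions $l$ and $l'$. Since $n \ge 4$, this support-$2$ input still has at least two identity coordinates, so the intermediate statement already computes its value as $s^{e_l + e_{l'}}$. This pins down the cross exponent and makes the high-support evaluation match $\prod_k a_k^{e_k}$, so $f$ is the monomial term function $x_1^{e_1}\dotsm x_n^{e_n}$, completing the proof. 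The three arithmetic thresholds used, namely $n \ge 3$ to define the exponents, $n \ge 4$ for the cross term, and $n > |S|$ for the pigeonhole steps, combine to the stated degree bound $\max(|S|,3)$.
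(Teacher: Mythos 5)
The paper does not actually prove Theorem~\ref{thm:comm_monoid_fin_rel}: it imports the result from \cite[Theorem~3.6]{DJPS11} and only remarks that the bound $\max(|S|,4)$ from the original source can be lowered to $\max(|S|,3)$ without changing the proof. So there is no in-paper argument to compare against; your proposal has to stand on its own, and it does. It is a correct, self-contained proof using exactly the machinery the paper relies on elsewhere: Lemma~\ref{lma:main_cond_fin_rel}, the pigeonhole step from $n>|S|$, and exponent bookkeeping in the spirit of Lemmas~\ref{lma:ei_exists} and~\ref{lma:FN_occurrences}. The one place you should be slightly more explicit is the treatment of the eliminated variable in a minor $f_{ll'}$: the monomial term inducing $f_{ll'}$ could a priori contain $x_l$, but since $f_{ll'}$ does not depend on its $l$-th argument you may substitute the constant $1$ for $x_l$ and obtain an equivalent monomial in which $x_l$ has exponent $0$; with that normalization your cross-exponent computation $s^{c_{l'}}=s^{e_l+e_{l'}}$ (which correctly uses the support-$2$ case of your intermediate claim, available because $n\ge 4$) pins down everything needed. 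The arithmetic of the thresholds ($n\ge 3$ to define the $e_k$, $n\ge 4$ for the cross term, $n\ge |S|+1$ for the two pigeonhole applications) indeed yields the claimed degree bound $\max(|S|,3)$, matching the paper's sharpened constant.
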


\noindent
Note that the original result in $[8]$ only claims that the degree is at most $\max (|S|, 4)$, but the proof actually yields the slightly stronger version given above.

A function $f \colon A^n \to A$ \emph{depends on its $i$-th argument} if there are $a, b \in A^n$ such that $a_j = b_j$ for all $j \ne i$ and $f(a) \ne f(b)$. 
For $n \in \mathbb{N}$ we write $\rg{n}$ instead of $\{1,\dotsc,n\}$.
Now let $m \le n$ in $\mathbb{N}$ and $i_1 < \dotsb < i_m$ in $\rg{n}$. 
Let $y_1,\dotsc,y_m$ be elements of a monoid $\alg{S}$. 
For a tuple $x \in S^n$ with
\[
x_j = \begin{cases}
y_k & \text{if } j = i_k \text{ for some } k \in \rg{m}, \\
1 & \text{otherwise,}
\end{cases}
\]
we also write
\[
x = (\bar 1,\underset{i_1}{y_1},\bar 1,\underset{i_2}{y_2},\dotsc,\bar 1,\underset{i_m}{y_m},\bar 1).
\]
The arity of $x$ will be clear from the context.
For a term $t(x_1,\dotsc,x_n)$,
\[
t(\bar 1,\underset{i_1}{y_1},\bar 1,\underset{i_2}{y_2},\dotsc,\bar 1,\underset{i_m}{y_m},\bar 1)
\]
denotes the term obtained when $x_{i_k}$ is replaced by $y_k$ for $k \in \rg{m}$ and the remaining $x_j$ are removed. 

\begin{lma}[{\cite[Lemma 2.6]{May13}}]
\label{lma:mayr_exist_depending_minor}
Let $A$ be a finite set, $n>|A|+1$, and $f: A^n \rightarrow$ A. If $f$ depends on its $i$-th argument for some $i \in \rg{n}$, then there exist $\ell < m$ in $\rg{n} \backslash\{i\}$ such that $f_{\ell m}$ depends on its i-th argument.
\end{lma}

Note that the bound $|A| + 1$ cannot be reduced to $|A|$ in Lemma~\ref{lma:mayr_exist_depending_minor}.
Here is a simple counter example.
Let $A = \{0, 1\}$, $n = 3$, and $f \colon A^n \to A$ such that $f_{12} = f_{13} = x_3$ and $f_{23} = x_1$.
Verify that $f$ exists by creating the value table.
Then $f$ depends on $x_2$ since $f(0, 0, 1) = 1 \ne 0 = f(0, 1, 1)$.
However, none of the identification minors depends on $x_2$.

\begin{lma}\label{lma:ei_exists}
Let $\alg{S}$ be a finite monoid. 
Let $n > |S| + 1$ and let $f \colon S^n \to S$ have \imt{} and depend on its $i$-th argument. 
Then there exists $e_i \in \N$ such that
\[ 
\text{$x_i^{e_i}$ induces $f(\bar{1},\underset{i}{x_i},\bar{1})$.}
\]
We refer to the minimal such $e_i$ as the \emph{variable exponent} of $x_i$ in $f$.
\end{lma}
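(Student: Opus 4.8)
The plan is to show that the unary operation obtained from $f$ by fixing every argument except the $i$-th to $1$ is a power of its variable. The first, routine, observation is that this operation is unaffected by passing to an identification minor that avoids position $i$: for any $p < q$ in $\rg{n} \setminus \{i\}$,
\[
f(\bar 1, \underset{i}{x_i}, \bar 1) = f_{pq}(\bar 1, \underset{i}{x_i}, \bar 1),
\]
since in the tuple $(\bar 1, \underset{i}{x_i}, \bar 1)$ the $p$-th and $q$-th entries both equal $1$, so replacing the $p$-th by the $q$-th changes nothing. By \imt{} the minor $f_{pq}$ is a term function, hence induced by some word $w$ in $x_1, \dotsc, x_n$. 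Evaluating $w$ at $(\bar 1, \underset{i}{x_i}, \bar 1)$ sends every variable other than $x_i$ to the monoid identity and keeps $x_i$, so the product collapses to $x_i^{e}$, where $e$ is the number of occurrences of the letter $x_i$ in $w$. Thus the restriction is automatically a power function, and the only real point is to ensure that the exponent can be taken $\ge 1$, i.e.\ that $x_i$ genuinely occurs in a suitable word.

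This is where the hypotheses $n > |S| + 1$ and dependence on $x_i$ must work together, and it is the step I expect to be least automatic. Using dependence, fix $a, b \in S^n$ that agree in every coordinate except the $i$-th and satisfy $f(a) \ne f(b)$. The $n - 1$ coordinates of $a$ at the positions different from $i$ take values in $S$, and $n - 1 > |S|$; so by the pigeonhole principle two of them coincide, say $a_p = a_q$ with $p < q$ in $\rg{n} \setminus \{i\}$. Since $b$ agrees with $a$ off position $i$ and $p, q \ne i$, also $b_p = b_q$. Identifying two coordinates that are already equal does not change the value of $f$, so $f(a) = f_{pq}(a)$ and $f(b) = f_{pq}(b)$.

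Let $w$ be a word inducing $f_{pq}$. The previous paragraph gives $w(a) = f(a) \ne f(b) = w(b)$, while $a$ and $b$ differ only in position $i$; a word avoiding the letter $x_i$ would take equal values on $a$ and $b$, so $x_i$ must occur in $w$, say $e \ge 1$ times. Plugging this specific choice of $p, q$ and $w$ into the computation of the first paragraph yields $f(\bar 1, \underset{i}{x_i}, \bar 1) = x_i^{e}$ with $e \ge 1$, so $x_i^{e}$ induces the restriction and we may let $e_i$ be the least such exponent. The point worth stressing is that the pigeonhole argument is run on the $n - 1$ positions \emph{other than} $i$, so that the resulting minor still depends on $x_i$; this is exactly what forces the arity bound $n > |S| + 1$ and what rules out the degenerate possibility that the restriction is constant.
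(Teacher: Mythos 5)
Your proof is correct, and while its skeleton matches the paper's (restrict to a minor $f_{pq}$ with $p,q\ne i$, note $f(\bar 1,\underset{i}{x_i},\bar 1)=f_{pq}(\bar 1,\underset{i}{x_i},\bar 1)$, and observe that any inducing word collapses to $x_i^{e}$ with $e$ the number of occurrences of $x_i$), you establish the crucial point $e\ge 1$ by a genuinely different route. The paper picks an \emph{arbitrary} pair $k<\ell$ in $\rg{n}\setminus\{i\}$ and invokes Lemma~\ref{lma:imt_sg_sl} to conclude that $f_{k\ell}$ depends on $x_i$; that lemma in turn rests on \cite[Lemma 2.6]{May13} and assumes $S$ contains a two-element semilattice. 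You instead take a witnessing pair $a,b$ for dependence on $x_i$, run the pigeonhole argument on the $n-1>|S|$ coordinates off position $i$ to find $a_p=a_q$ with $p,q\ne i$, and check directly that $f_{pq}$ still separates $a$ from $b$, so $x_i$ must occur in any word inducing it. This is in effect an inlined proof of the one instance of \cite[Lemma 2.6]{May13} that is needed here. What your route buys: it is self-contained and uses only the hypotheses actually stated (a monoid with at least two elements), whereas the paper's appeal to Lemma~\ref{lma:imt_sg_sl} silently imports the semilattice hypothesis --- harmless for the nilpotent monoids to which the lemma is later applied, but not literally granted by the lemma's statement. What it does not give you is the stronger conclusion of Lemma~\ref{lma:imt_sg_sl} that \emph{every} minor avoiding $i$ depends on $x_i$, which the paper needs elsewhere (e.g.\ in Lemma~\ref{lma:FN_occurrences}); for the present statement, your single well-chosen pair suffices.
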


\begin{proof}
For $\mathbf{S}$ of size 1, the lemma holds trivially.
Assume $|S| \ge 2$.
By Lemma~\ref{lma:mayr_exist_depending_minor}, there exist $k < \ell$ in $\rg{n}\setminus\{i\}$ such that $f_{k \ell}$ depends on $x_i$.
Let $t$ be a term that induces $f_{k \ell}$.
Since $t^\alg{S}$ depends on its $i$-th argument, $x_i$ occurs $e_i \in \N$ times in $t(x_1, \dots, x_n)$.
We obtain
\[
  f(\bar{1},\underset{i}{x_i},\bar{1}) = f_{k \ell}(\bar{1},\underset{i}{x_i},\bar{1}) = t^\alg{S}(\bar{1},\underset{i}{x_i},\bar{1}) = x_i^{e_i},
\]
which proves the Lemma.
\end{proof}

\begin{lma}\label{lma:FN_occurrences}
Let $\alg{S}$ be a finite nilpotent monoid and $c \in \mathbb{N}$ minimal such that $S \setminus \{1\}$ satisfies $x^c \approx 0$.
Let $n > |S| + 1$ and let $f \colon S^n \to S$ have \imt{} and depend on all variables.
Let $e_1,\dotsc,e_n$ be the variable exponents of $f$.
Then each term $t$ inducing an identification minor $f_{ij}$ has the following properties:
\begin{enumerate}
  \item\label{it1_lma:FN_occurrences} 
  $x_i$ does not occur in $t$,
  \item\label{it2_lma:FN_occurrences}
  for $k\in \rg{n}\setminus \{i,j\}$, $x_k$ occurs $e_k$ times if $e_k < c$, and at least $c$ times otherwise,
  \item\label{it3_lma:FN_occurrences}
  $x_j$ occurs $e_i + e_j$ times if $e_i + e_j < c$, and at least $c$ times otherwise.
\end{enumerate}
\end{lma}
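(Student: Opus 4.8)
The plan is to read off each occurrence count from a one-variable evaluation of the term $t$. Two observations drive everything. First, write $N := S \setminus \{1\}$ and, for $x \in N$, let $k(x)$ be the least power with $x^{k(x)} = 0$; then $c = \max_{x \in N} k(x)$ by minimality of $c$, and a power identity $x^p \approx x^q$ holds on $S$ if and only if $p = q$ or $p, q \ge c$ (at $x = 1$ both sides are $1$, and on $N$ one has $x^p = x^q$ iff $p = q$ or $p, q \ge k(x)$). I call this \emph{exponent comparison}. Second, if a variable occurs $p$ times in a term and all other variables are set to the identity $1$, the term evaluates to the $p$-th power of that variable.

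I would first prove \ref{it1_lma:FN_occurrences} and \ref{it3_lma:FN_occurrences} for an arbitrary identification minor $f_{ij}$. For \ref{it1_lma:FN_occurrences}: in $f_{ij}$ the $i$-th argument of $f$ is replaced by $x_j$, so $x_i$ does not occur in the defining expression and $f_{ij}(\bar 1,\underset{i}{s},\bar 1) = f(\bar 1) = 1$ for all $s \in S$ (here $f(\bar 1) = 1$ by Lemma~\ref{lma:ei_exists}). Were $x_i$ to occur $q \ge 1$ times in $t$, the left side would be $s^q$, which is $0 \ne 1$ at $s = 0$; hence $q = 0$. For \ref{it3_lma:FN_occurrences}: fix $k \in \rg{n}\setminus\{i,j\}$ and set every variable but $x_k$ to $1$. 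Since $k \ne i,j$, this gives $f_{ij}(\bar 1,\underset{k}{x_k},\bar 1) = f(\bar 1,\underset{k}{x_k},\bar 1) = x_k^{e_k}$, whereas $t$ evaluates to $x_k^{p}$ with $p$ the number of occurrences of $x_k$ in $t$. Exponent comparison applied to $x_k^{p} \approx x_k^{e_k}$ gives $p = e_k$ if $e_k < c$ and $p \ge c$ otherwise.

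The substantial step is \ref{it2_lma:FN_occurrences}, which requires the value of $f$ when both arguments $i$ and $j$ carry the same element. Setting every variable but $x_j$ to $1$ feeds $x_j$ into both slots $i$ and $j$, so $f_{ij}(\bar 1,\underset{j}{x_j},\bar 1) = f(\bar 1,\underset{i}{x_j},\bar 1,\underset{j}{x_j},\bar 1)$, and this equals $x_j^{q}$ with $q$ the number of occurrences of $x_j$ in $t$ (using \ref{it1_lma:FN_occurrences}). To evaluate the merged expression, note $n > |S|+1 \ge 3$, so there are $\ell < m$ in $\rg{n}\setminus\{i,j\}$; pick a term $u$ inducing $f_{\ell m}$. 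Since $\ell, m \notin \{i,j\}$, setting all variables outside $\{i,j\}$ to $1$ makes $f$ and $f_{\ell m}$ agree, so $f(\bar 1,\underset{i}{y},\bar 1,\underset{j}{y},\bar 1) = y^{p_i + p_j}$, where $p_i, p_j$ count the occurrences of $x_i, x_j$ in $u$. By \ref{it3_lma:FN_occurrences} for the minor $f_{\ell m}$, $p_i = e_i$ if $e_i < c$ (else $p_i \ge c$), and similarly for $p_j$. A short case check gives $p_i + p_j = e_i + e_j$ when $e_i + e_j < c$ and $p_i + p_j \ge c$ when $e_i + e_j \ge c$. Exponent comparison applied to $x_j^{q} \approx x_j^{p_i+p_j}$ then yields \ref{it2_lma:FN_occurrences}.

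The main obstacle is this last step. The single occurrence count $q$ sees the two exponents $e_i, e_j$ only through a sum, and exponent comparison renders any surplus beyond $c$ invisible, which forces the two-case formulation of \ref{it2_lma:FN_occurrences} rather than an exact value. The role of the auxiliary minor $f_{\ell m}$ is to compute the merged value $f(\bar 1,\underset{i}{y},\bar 1,\underset{j}{y},\bar 1)$ in terms of occurrence counts already controlled by \ref{it3_lma:FN_occurrences}; its existence is exactly what needs $n \ge 4$, and the case analysis on whether $e_i$ and $e_j$ individually reach $c$ is the only delicate bookkeeping.
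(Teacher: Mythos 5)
Your proposal is correct and follows essentially the same route as the paper: part (1) by evaluating at a single non-identity element, part (2) by comparing the one-variable restriction $x_k^{e_k}$ with the occurrence count via the equivalence pattern of powers below and above $c$, and part (3) by introducing an auxiliary minor $f_{\ell m}$ with $\ell,m\notin\{i,j\}$ and transferring its occurrence counts through the merged evaluation. The only difference is presentational (your explicit ``exponent comparison'' lemma), not mathematical.
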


\begin{proof}
Let $f_{ij}$ be an identification minor with $i < j$ in $\rg{n}$ induced by some term $t$.
Since every nilpotent monoid has at least two elements by definition, we have $n \ge |S| + 2 \ge 4$.

\begin{enumerate}
\item[\enumref{it1_lma:FN_occurrences}]
Suppose $x_i$ occurs in $t$.
Then we obtain the contradiction
\[ 1 = f(1,\dotsc,1) = f_{ij}(\bar 1, \underset{i}{0}, \bar 1) = t^\alg{S}(\bar 1, \underset{i}{0}, \bar 1) = 0. \]

\item[\enumref{it2_lma:FN_occurrences}]
Let $k\in \rg{n}\setminus \{i,j\}$. 
Then
\[ \text{$t(\bar 1, \underset{k}{x_k}, \bar 1)$ induces $f(\bar 1, \underset{k}{x_k}, \bar 1)$.} \]
Note that $x,x^2,\dotsc,x^{c-1}$ are pairwise inequivalent terms in $\alg{S}$,
while $x^c,x^{c+1},\dotsc$ are all equivalent.
Thus, if $e_k < c$, then $x_k$ occurs precisely $e_k$ times in $t$.
If $e_k \ge c$, then $x_k$ occurs at least $c$ times in $t$.

\item[\enumref{it3_lma:FN_occurrences}]
Let $k < \ell$ in $\rg{n} \setminus \{i,j\}$ and $s$ be a term that induces $f_{k\ell}$. 
Then
\[ \text{$t(\bar 1, \underset{j}{x_j}, \bar 1)$ and $s(\bar 1, \underset{i}{x_j}, \bar 1, \underset{j}{x_j}, \bar 1)$ both induce $f(\bar 1, \underset{i}{x_j}, \bar 1, \underset{j}{x_j}, \bar 1)$.} \]
If $e_i + e_j < c$, then by~\enumref{it2_lma:FN_occurrences} $x_i$ and $x_j$ occur $e_i$ and $e_j$ times in $s$, respectively.
Thus $x_j$ occurs $e_i + e_j$ times in $t$.
If $e_i + e_j \ge c$, then $x_i$ and $x_j$ occur at least $\min(e_i, c)$ and $\min(e_j, c)$ times in $s$, respectively, again by~\enumref{it2_lma:FN_occurrences}.
Thus $x_j$ occurs at least $c$ times in $t$.

\end{enumerate}
\end{proof}



\begin{lma}\label{lma:depend_on_each_var}
Let $\alg{A}$ be a finite algebra, $n \ge 2$, and let $f \colon A^n \to A$ have \imt{}. 
If $f$ does not depend on one of its variables, then $f$ is a term function.
\end{lma}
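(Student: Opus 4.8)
The plan is to exhibit $f$ directly as a term function by matching it to one of its identification minors, all of which are term functions under the \imt{} hypothesis. Suppose $f$ does not depend on its $p$-th variable. The asymmetry to keep in mind is that an identification minor $f_{ij}$ (with $i<j$) always overwrites the \emph{lower}-indexed coordinate $x_i$ by $x_j$, so the natural move depends on whether $p<n$ or $p=n$. If $p<n$, I would claim $f=f_{pn}$ as functions. Indeed, $f_{pn}(x_1,\dotsc,x_n)=f(x_1,\dotsc,x_{p-1},x_n,x_{p+1},\dotsc,x_{n-1},x_n)$ differs from $f(x_1,\dotsc,x_n)$ only in the $p$-th coordinate, where $x_p$ has been replaced by $x_n$; since $f$ does not depend on its $p$-th variable, this replacement does not change the value. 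Hence $f=f_{pn}$, which is a term function by \imt{}.

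The remaining, and genuinely harder, case is $p=n$, where no identification minor eliminates exactly the coordinate we wish to discard. Here I would substitute inside a term. Since $n\ge 2$, the minor $f_{1n}$ is defined and, by \imt{}, is induced by some term $t(x_1,\dotsc,x_n)$ with $t^A(x_1,\dotsc,x_n)=f(x_n,x_2,\dotsc,x_{n-1},x_n)$. Substituting $x_n\mapsto x_1$ gives the term $s(x_1,\dotsc,x_{n-1}):=t(x_1,\dotsc,x_{n-1},x_1)$, which does not contain $x_n$, and $s^A(x_1,\dotsc,x_{n-1})=f(x_1,x_2,\dotsc,x_{n-1},x_1)$. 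This restores the first coordinate to $x_1$, and since $f$ does not depend on its last variable the value placed in the last coordinate is irrelevant, so $s^A(x_1,\dotsc,x_{n-1})=f(x_1,\dotsc,x_n)$. Reading $s$ as an $n$-ary term that merely happens not to use $x_n$ (equivalently, composing the $(n-1)$-ary term function with the projections $\pr_1^n,\dotsc,\pr_{n-1}^n$), it induces exactly $f$, so $f$ is a term function.

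I expect the main obstacle to be precisely this last-variable case. Because an identification minor only ever removes the lower-indexed of the two identified coordinates, one cannot simply read $f$ off as a minor when the redundant variable is $x_n$; the substitution step, together with the observation that an $(n-1)$-ary term function remains a term function after adjoining a dummy $n$-th argument, is what closes the gap. The hypothesis $n\ge 2$ enters exactly here, guaranteeing a second coordinate into which the identification can be made; for $n=1$ the statement is genuinely false, as a constant unary function need not be a term function.
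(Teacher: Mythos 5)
Your proof is correct and rests on the same core idea as the paper's: since $f$ ignores its $p$-th variable, identifying that variable with another does not change the function, so $f$ coincides with one of its identification minors (up to a harmless variable substitution) and is therefore a term function. The paper's entire proof is ``w.l.o.g.\ $f$ does not depend on $x_1$, then $f = f_{12}$''; your explicit case split on $p<n$ versus $p=n$ merely spells out the relabeling/substitution detail that the paper's w.l.o.g.\ leaves implicit.
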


\begin{proof}
W.l.o.g.\ we can assume that $f$ does not depend on $x_1$.
Then $f = f_{12}$, which is a term function.
\end{proof}

\begin{lma}\label{lma:FN_unique_term}
Let $d \ge 2$, let $A$ be a finite set with $|A| \ge 2$, and let $\alg{S} := (\algfn_d A)^1$.
Let $t(x_1,\dotsc,x_n)$ be a term with $e_i$ occurrences of $x_i$ for $i \in \rg{n}$.
\begin{enumerate}
\item\label{it1_lma:FN_unique_term}
If $e_1 + \dotsb + e_n < d$, then $t$ is the unique term inducing $t^\alg{S}$.
\item\label{it2_lma:FN_unique_term}
If $n = 2$ and $e_1 + e_2 \ge d$, then each permutation of $t$ induces $t^\alg{S}$.
\end{enumerate}
\end{lma}

\begin{proof}
\begin{enumerate}
\item[\enumref{it1_lma:FN_unique_term}]
Assume $e_1 + \dotsb + e_n < d$ and let $s$ be another term that induces $t^\alg{S}$.
Our aim is to show that $t = s$.
If $t \neq s$, then there exist terms $u, v, w$ and distinct indices $i,j \in \rg{n}$ such that
\[\begin{aligned}
  t &= u(x_1, \dots, x_n) \, x_i \, v(x_1, \dots, x_n) \\
  s &= u(x_1, \dots, x_n) \, x_j \, w(x_1, \dots, x_n) \text{.}
\end{aligned}\]
W.l.o.g. assume $i < j$.
Pick $a \neq b$ from $A$.
Then
\[\begin{aligned}
  t^{\alg{S}}(\bar{1},\underset{i}{a},\bar{1},\underset{j}{b},\bar{1})
  &= u^{\alg{S}}(\bar{1},\underset{i}{a},\bar{1},\underset{j}{b},\bar{1}) \, a \, v^{\alg{S}}(\bar{1},\underset{i}{a},\bar{1},\underset{j}{b},\bar{1}) \\
  &\neq u^{\alg{S}}(\bar{1},\underset{i}{a},\bar{1},\underset{j}{b},\bar{1}) \, b \, w^{\alg{S}}(\bar{1},\underset{i}{a},\bar{1},\underset{j}{b},\bar{1}) \\
  &= s^{\alg{S}}(\bar{1},\underset{i}{a},\bar{1},\underset{j}{b},\bar{1}) \\
\end{aligned}\]
by the definition of $(\algfn_d A)^1$ and since the left hand side of the inequality is ${} \neq 0$.
This contradicts the fact that $s$ induces $t^{\alg{S}}$.
Thus $t = s$.
\item[\enumref{it2_lma:FN_unique_term}]
Assume $n = 2$ and $e_1 + e_2 \ge d$. 
Let $s$ be a permutation of $t$ and $u, v \in S$.
If both $u$ and $v$ are distinct from $1$, then $t^\alg{S}(u, v) = 0 = s^\alg{S}(u, v)$ by $d$-nilpotence.
If $u = 1$, then $t^\alg{S}(u, v) = v^{e_2} = s^\alg{S}(u, v)$.
If $v = 1$, the equation follows similarly.
\end{enumerate}
\end{proof}

\begin{lma}\label{lma:cycle-free_digraph}
Let $\algb{V, \edgerel}$ be a finite, cycle-free digraph.
Then there exists a linear order on~$V$ that includes~$\edgerel$.
\end{lma}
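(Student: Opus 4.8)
The plan is to prove this by induction on $|V|$, peeling off one vertex at each step; this is the standard topological-sorting argument. An alternative would be to observe that cycle-freeness makes the transitive closure of $\edgerel$ an irreflexive, transitive relation (a strict partial order) and then invoke the order-extension principle, but for a finite digraph the direct inductive construction is cleaner and fully self-contained, so that is the route I would take.

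The key preliminary observation is that every finite cycle-free digraph has a \emph{source}, i.e.\ a vertex $v$ with no incoming edge (no $u \in V$ satisfying $u \edgerel v$). Indeed, if every vertex had a predecessor, then starting from an arbitrary vertex and repeatedly following an incoming edge backward would produce a sequence $v_0, v_1, v_2, \dotsc$ with $v_{i+1} \edgerel v_i$ for all $i$. Since $V$ is finite, two of the $v_i$ must coincide, and the segment of the sequence between the two occurrences is a directed cycle, contradicting cycle-freeness. In particular there are no loops $v \edgerel v$.

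For the induction itself, the base case $|V| \le 1$ is immediate. For the inductive step I would choose a source $v$ and apply the induction hypothesis to the induced subdigraph $\algb{V', \edgerel'}$ with $V' = V \setminus \{v\}$ and $\edgerel' = \edgerel \cap (V' \times V')$, which is again finite and cycle-free; this yields a linear order $\le'$ on $V'$ that includes $\edgerel'$. I then extend $\le'$ to all of $V$ by declaring $v$ the least element: set $v \le x$ for every $x \in V$ and let $\le$ agree with $\le'$ on $V'$. This is plainly a linear order. To verify $\edgerel \subseteq {\le}$, note that every edge lying inside $V'$ belongs to $\edgerel'$ and hence to $\le'$; an edge $v \edgerel w$ forces $v \le w$, which holds because $v$ is the minimum; and since $v$ is a source there is no edge of the form $u \edgerel v$, so no further edges remain to be checked.

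The only nonroutine point is the existence of a source, and this is exactly where finiteness is used essentially: an infinite cycle-free digraph need not have a source (for instance $\mathbb{Z}$ with $n \edgerel n+1$), so this particular argument is special to the finite case. Once the source lemma is in hand, the remainder is pure bookkeeping, and I expect no genuine obstacle.
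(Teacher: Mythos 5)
Your proof is correct and follows essentially the same route as the paper: induction on $|V|$, removing a source vertex and placing it as the minimum of the extended order. The only difference is that you explicitly justify the existence of a source, which the paper takes for granted.
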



\begin{proof}
First observe that the reflexive transitive closure $S$ of $E$ is a partial order.
By the Szpilrajn extension theorem~\cite{szpilrajn1930extension}, there exists a linear order that includes $S$.
\end{proof}


\newcommand{\vord}{\prec}

\begin{proof}[Proof of Theorem~\ref{thm:FN1_finitely_related}]
First assume $d \le 2$ or $|A| = 1$. 
Then $\alg{S}$ is commutative.
By Theorem~\ref{thm:comm_monoid_fin_rel} $\alg{S}$ is finitely related with degree at most $\max(|S|,3)$.
If $|S|=1$, then $\alg{S}$ is finitely related with degree $1$.
Thus the degree is at most $|S|+1$.

Now assume $d \ge 3$ and $|A| \ge 2$.
Let $n > |S| + 1$ and let $f \colon S^n \to S$ have \imt.
Note that $|S| \ge d + 1$ and thus $n \ge 6$.
By Lemma~\ref{lma:depend_on_each_var} we can assume that $f$ depends on all variables.
Let $e_1,\dotsc,e_n$ be the variable exponents of $f$ defined in Lemma~\ref{lma:ei_exists}.
We will construct a term $s(x_1,\dotsc,x_n)$ that induces $f$ with $e_i$ occurrences of $x_i$ for $i \in \rg{n}$.
We enumerate the occurrences of $x_i$ by a second index $\alpha \in \rg{e_i}$. 
To this end, let
\[ X := \{ x_{i\alpha} \mid i \in \rg{n},\, \alpha \in \rg{e_i} \}. \]
Let $i < j$ in $\rg{n}$ with $e_i + e_j < d$.
By \imt{} and Lemma~\ref{lma:FN_unique_term}
\begin{equation}\label{eq0_thm:FN1_finitely_related}
\text{there is a unique term $h_{ij}$ that induces $f(\bar 1,\underset{i}{x_i},\bar 1,\underset{j}{x_j},\bar 1)$}.
\end{equation}
By Lemma~\ref{lma:FN_occurrences} $h_{ij}$ has $e_i$ occurrences of $x_i$ and $e_j$ occurrences of $x_j$.
We define a strict linear order $<_{ij}$ on the subset $\{ x_{i \alpha} \mid \alpha \in \rg{e_i} \} \cup \{ x_{j \beta} \mid \beta \in \rg{e_j} \}$ of $X$.
For $k, \ell \in \{i, j\}$ let
\[
  x_{k \alpha} <_{ij} x_{\ell \beta}
\]
if the $\alpha$-th occurrence of $x_k$ precedes the $\beta$-th occurrence of $x_\ell$ in $h_{ij}$.
Let $\vord$ be the relation on $X$ such that
\[
{\vord} := \bigcup \{ {<_{ij}} \mid i < j \text{ in }\rg{n} \text{ and } e_i + e_j < d \}.
\]
We claim that
\begin{equation}\label{eq1_thm:FN1_finitely_related}
\algb{X,\vord} \text{ is cycle-free}.
\end{equation}
Suppose there is a cycle $c = (x_{i_1\alpha_1}, \dotsc, x_{i_m\alpha_m}, x_{i_1\alpha_1})$ of minimal length $m$. We show that 
\begin{equation}\label{eq2_thm:FN1_finitely_related}
i_1,\dotsc,i_m \text{ are distinct.}
\end{equation}
Suppose $i_u = i_v$ for $u < v$ in $\rg{m}$.
If $\alpha_u \le \alpha_v$, then $x_{i_{u-1}\alpha_{u-1}} \vord x_{i_v\alpha_v}$ and 
\[ (x_{i_1\alpha_1}, \dotsc, x_{i_{u-1}\alpha_{u-1}}, x_{i_v\alpha_v}, \dotsc, x_{i_m\alpha_m}, x_{i_1\alpha_1}) \] is a cycle shorter than $c$.
If $\alpha_v < \alpha_u$, then $x_{i_{v-1}\alpha_{v-1}} \vord x_{i_u\alpha_u}$ and 
\[ (x_{i_u\alpha_u}, \dotsc, x_{i_{v-1}\alpha_{v-1}}, x_{i_u\alpha_u}) \] is a cycle shorter than $c$. 
This proves~\eqref{eq2_thm:FN1_finitely_related}.
W.l.o.g.\ we can assume that $i_1=1,\dotsc,i_m=m$.

Let~$\oplus$ and~$\ominus$ be addition and subtraction modulo $m$ on the set $\rg{m}$, respectively.
By the definition of $\vord$ we have 
\begin{equation}\label{eq2.5_thm:FN1_finitely_related}
\text{$e_i + e_{i \oplus 1} < d$ for $i \in \rg{m}$.}
\end{equation}

Assume $m \le n-2$. 
Then $f(x_1,\dotsc,x_m, \bar 1) = f_{n-1, n}(x_1, \dotsc, x_m, \bar 1)$ is induced by some term $t$.
For all $i < j$ in $\rg{m}$ we have
\[
t(\bar 1,\underset{i}{x_i},\bar 1,\underset{j}{x_j},\bar 1) = h_{ij}
\]
by~\eqref{eq0_thm:FN1_finitely_related}.
But then the order of the variables in $t(x_1, \dots, x_m)$ prevents the cycle~$c$.

Now assume $m \ge n-1$.
Pick $i$ such that $e_i = \max(e_1, \dotsc, e_m)$.
By~\eqref{eq2.5_thm:FN1_finitely_related} we have $e_{i \ominus 1} + e_i < d$ and $e_i + e_{i\oplus 1} < d$. 
Thus $e_{i \ominus 1} + e_{i \oplus 1} < 2d - 2e_i$.
From the maximality of $e_i$ we obtain $e_{i \ominus 1} + e_{i \oplus 1} \le 2 e_i$.
Adding these inequalities yields $e_{i \ominus 1} + e_{i \oplus 1} < d$.
Thus 
$x_{i \ominus 1,\alpha_{i \ominus 1}}$ and $x_{i \oplus 1,\alpha_{i \oplus 1}}$ are related by $<_{i \ominus 1, i \oplus 1}$.
From that and $m \ge n-1 \ge 5$ it follows that we can reduce $c$ to a shorter cycle. 
This contradicts the minimality of $m$. 
We proved~\eqref{eq1_thm:FN1_finitely_related}.

By Lemma~\ref{lma:cycle-free_digraph} there is a linear order $<$ on $X$ that includes $\vord$.
We order the elements of $X$ by $<$ and drop the second index.
Let $t(x_1,\dotsc,x_n)$ be the resulting term.
We claim that
\begin{equation}\label{eq3_thm:FN1_finitely_related}
t \text{ induces } f.
\end{equation}
Fix $a \in S^n$.
Let $i_1 < \dotsb < i_k$ in $\rg{n}$ be the positions where $a$ is ${} \ne 1$.
First assume $e_{i_1} + \dotsb + e_{i_k} \ge d$.
Then $t^\alg{S}(a) = 0$ by $d$-nilpotence.
Since $n > |S|$, there are $i < j$ in $\rg{n}$ such that $a_i = a_j$.
Thus $f(a) = f_{ij}(a)$.
Note that $d \in \N$ is minimal such that $S \setminus \{1\}$ satisfies $x^d \approx 0$.
By Lemma~\ref{lma:FN_occurrences} the product $f_{ij}(a)$ has at least $d$ factors $\ne 1$. 
Hence $f(a) = f_{ij}(a) = 0 = t^\alg{S}(a)$.

Now assume $e_{i_1} + \dotsb + e_{i_k} < d$.
Note that $k < d < |S| < n$. 
Thus $n-k \ge 2$ arguments are $1$. 
By \imt
\[ f(\bar 1,\underset{i_1}{x_{i_1}},\bar 1,\underset{i_2}{x_{i_2}},\dotsc,\bar 1,\underset{i_k}{x_{i_k}},\bar 1) \text{ is induced by some term } s(x_{i_1},\dotsc,x_{i_k}). \]
We have $s(\bar 1,\underset{i}{x_{i}},\bar 1,\underset{j}{x_{j}},\bar 1) = h_{ij}$ for all $i < j$ in $\{i_1, \dots, i_k\}$.
This means the variables of $s$ are ordered by ${<_s} := \bigcup \{ {<_{ij}} \mid i < j \text{ in }\{i_1, \dots, i_k\} \}$.
Thus $<$ includes $<_s$, and the latter is also a linear order.
Therefore
\[
t(\bar 1,\underset{i_1}{x_{i_1}},\bar 1,\underset{i_2}{x_{i_2}},\dotsc,\bar 1,\underset{i_k}{x_{i_k}},\bar 1) = s.
\]
Hence $t^\alg{S}(a) = s^\alg{S}(a) = f(a)$.
This proves~\eqref{eq3_thm:FN1_finitely_related}. 
By Lemma~\ref{lma:main_cond_fin_rel} $\alg{S}$ is finitely related.
\end{proof}

\begin{lma}\label{lma:nilp_mon_IMT-fct_form}
Let $\alg{S}$ be a $d$-nilpotent monoid. 
Let $n > |S| + 1$ and let $f \colon S^n \to S$ have \imt{} and depend on all variables. 
Let $e_1,\dotsc,e_n$ be the variable exponents of $f$ and assume that 
\begin{equation}\label{eq0_lma:nilp_mon_IMT-fct_form}
e_1,\dotsc,e_r \le d-2,\quad e_{r+1},\dotsc,e_n \in \{d-1,d\} 
\end{equation}
for some $r \in \{0,\dotsc,n\}$.
Then for all $a_1, \dots, a_n \in S$,
\begin{equation}\label{eq1_lma:nilp_mon_IMT-fct_form}
f(a_1,\dotsc,a_n) = f(a_1,\dotsc,a_r,\bar 1) \,a_{r+1}^{e_{r+1}} \dotsm a_n^{e_n}.
\end{equation}
\end{lma}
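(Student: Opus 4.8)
The plan is to verify the identity \eqref{eq1_lma:nilp_mon_IMT-fct_form} pointwise: fix $a \in S^n$ and show both sides agree at $a$. The conceptual heart is that each ``high'' variable $x_i$ (with $e_i \in \{d-1,d\}$) is so heavy that as soon as it is active together with \emph{any} other active variable, the total number of non-identity factors reaches $d$ and the product collapses to $0$ by $d$-nilpotence. Thus the right-hand side can be nonzero only when the high part is trivial, or when exactly one high variable is active and all other coordinates equal $1$; in both situations the identity is immediate, and all remaining tuples must be shown to annihilate \emph{both} sides.

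First I would record two facts about the exponents. Let $c$ be as in Lemma~\ref{lma:FN_occurrences}. Since the only functions induced on $S$ by powers $x^e$ are $x \mapsto x^e$ for $0 \le e < c$ together with the ``zero-off-identity'' map induced by every $x^e$ with $e \ge c$, the variable exponent of Lemma~\ref{lma:ei_exists} always satisfies $e_i \le c$. Hence the existence of a high variable forces $c \ge d-1$ (and $c=d$ once some $e_i=d$), while the chain $1,0,x_0,x_0^2,\dots,x_0^{c-1}$ of distinct elements (for a witness $x_0$ with $x_0^{c-1}\neq 0$) yields $|S| \ge c+1 \ge d$. If no high variable exists ($r=n$) the identity is trivial, so I assume $r<n$.

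Next, for fixed $a$ set $P := \{ i > r \mid a_i \ne 1\}$ and $Q := \{ i \le r \mid a_i \ne 1\}$. If $P = \varnothing$ then $a = (a_1,\dots,a_r,\bar 1)$ and both sides equal $f(a_1,\dots,a_r,\bar 1)$; if $P = \{i_0\}$ and $Q = \varnothing$ then $a = (\bar 1,\underset{i_0}{v},\bar 1)$ and both sides equal $v^{e_{i_0}}$ by the definition of the variable exponent (using $f(\bar 1)=1$). In every other case ($|P|\ge 2$, or $|P|=1$ with $Q\neq\varnothing$) I claim both sides vanish. For the right-hand side: if $|P|\ge 2$ or some active $e_i=d$, then the high monomial already has at least $d$ non-identity factors; and if $|P|=1$ with $e_{i_0}=d-1$ but $Q\neq\varnothing$, then $f(a_1,\dots,a_r,\bar 1)$ is either $0$ or a genuine product of at least $|Q|\ge 1$ non-identity elements, so appending $v^{d-1}$ produces at least $d$ factors and hence $0$.

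The substantive step is showing $f(a)$ vanishes in these cases. Since $n>|S|+1$, some pair $i<j$ has $a_i=a_j$, so $f(a)=f_{ij}(a)=t^S(a)$ for a term $t$ inducing $f_{ij}$, and I would bound the number of non-identity factors of $t^S(a)$ via the occurrence counts of Lemma~\ref{lma:FN_occurrences}. The main obstacle is the boundary arity $n=|S|+2$, where the collision pair cannot always be chosen disjoint from the active coordinates; I would resolve this by a dichotomy on the number of inactive coordinates. If at least two coordinates equal $1$, pick $i,j$ among them: every active variable then survives as an ``other'' variable contributing $\min(e_k,c)$ factors, so the count is at least $\sum_{k:\,a_k\ne 1}\min(e_k,c)$, which is $\ge d$ in each remaining case (using $c\ge d-1$, $e_i\ge d-1$ for $i\in P$, and $d\ge 2$). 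If at most one coordinate equals $1$, then at least $n-1\ge |S|+1$ coordinates are active, and after the single identification at least $|S|\ge d$ distinct active variables each still occur, so $t^S(a)$ again has at least $d$ non-identity factors and equals $0$. This exhausts all tuples and establishes \eqref{eq1_lma:nilp_mon_IMT-fct_form}.
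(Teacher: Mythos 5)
Your proof is correct and follows essentially the same route as the paper: a pointwise case analysis on which coordinates of $a$ differ from $1$, using Lemma~\ref{lma:FN_occurrences} together with $d$-nilpotence to show that both sides vanish as soon as a high variable is active alongside any other active coordinate. The only (minor) difference is that you split on whether the collision pair $a_i=a_j$ can be chosen among the identity coordinates, whereas the paper argues directly with an arbitrary collision pair by tracking how many times the value $a_k$ survives in the identification minor.
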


\begin{proof}
Fix $a \in S^n$.
Let $c \in \N$ be minimal such that $S \setminus \{1\}$ satisfies $x^c \approx 0$.
By the minimality of $e_i$, as defined in Lemma~\ref{lma:ei_exists}, we have $e_i \le c$ for all $i$.
If $c \le d-2$, then $e_i \le d-2$ for all $i$. 
Then $r = n$ and there is nothing to prove.
Thus assume
\begin{equation}\label{eq2_lma:nilp_mon_IMT-fct_form}
c \in \{d-1,d\}.
\end{equation}
Let $g(a_1,\dotsc,a_n)$ be the right hand side of~\eqref{eq1_lma:nilp_mon_IMT-fct_form}.
If $a_{r+1} = \dotsb = a_n = 1$, then $f(a) = g(a)$. 
Assume $a_k \ne 1$ for some $k \in \{r+1,\ldots,n\}$.
If $a_\ell = 1$ for all $\ell \in \rg{n}\setminus\{k\}$, then $f(a) = a_k^{e_k} = g(a)$.
Assume $a_\ell \ne 1$ for some $\ell \in \rg{n}\setminus\{k\}$. 
Then $g(a) = 0$ since there are at least $d$ factors ${}\ne1$.
Since $n > |S|$, there are $i < j$ in $\rg{n}$ such that $a_i = a_j$.
Let $t$ be a term inducing $f_{ij}$.
Then $f(a) = t^\alg{S}(a)$. 
We show that
\begin{equation}\label{eq3_lma:nilp_mon_IMT-fct_form}
t^\alg{S}(a) = 0. 
\end{equation}
For both $k = i$ and $k \ne i$, the factor $a_k$ occurs at least $c \ge d-1$ times in $t^\alg{S}(a)$ by~\eqref{eq2_lma:nilp_mon_IMT-fct_form} and Lemma~\ref{lma:FN_occurrences}.
Since $a_\ell \ne 1$, there are at least $d$ factors $\ne 1$ in $t^\alg{S}(a)$, and~\eqref{eq3_lma:nilp_mon_IMT-fct_form} follows.
Hence $f(a) = g(a)$, and~\eqref{eq1_lma:nilp_mon_IMT-fct_form} is proved.
\end{proof}


\begin{proof}[Proof of Theorem~\ref{thm:4nilp_fin_rel}]
First assume $\alg{S}$ is commutative.
By Theorem~\ref{thm:comm_monoid_fin_rel} $\alg{S}$ is finitely related with degree at most $\max(|S|,3)$.
Since $|S| \ge 2$, the degree is at most $|S|+1$.

Assume $\alg{S}$ is noncommutative.
Let $d \in \N$ be minimal such that $S \setminus \{1\}$ is $d$-nilpotent. 
Let $c \in \mathbb{N}$ be minimal such that $S\setminus \{1\}$ satisfies $x^c \approx 0$.
Let $n > |S| + 1$ and let $f \colon S^n \to S$ have \imt.
The minimality of $c$ and $d$ and the noncommutativity of $\alg{S}$ yield $d \le |S \setminus \{1\}|$ and
\[
  d \in \{3, 4\}, \quad 2 \le c \le d, \quad d+3 \le n.
\]
By Lemma~\ref{lma:depend_on_each_var} we can assume that $f$ depends on all variables.
Let $e_1,\dotsc,e_n$ be the variable exponents of $f$ defined in Lemma~\ref{lma:ei_exists}.
By reordering variables we assume there are $r\le s$  in $\{0,\dotsc,n\}$ such that
\[
e_1 = \dotsb = e_r = 1, \quad e_{r+1} = \dotsb = e_s = 2, \quad e_{s+1},\dotsc,e_n \ge 3.
\]
We claim the following.
\begin{equation}\label{eq0.7_thm:4nilp_fin_rel}
\begin{aligned}
&\text{Let $a = (a_1, \dots, a_s, \bar 1) \in S^n$ such that $i_1 < \dots < i_m$ in $\rg{s}$ are the}\\
&\text{positions where $a$ is ${} \ne 1$, and $e_{i_1} + \dots + e_{i_m} \ge d$. Then $f(a) = 0$.} 
\end{aligned}
\end{equation}
Since $n > |S|$ there are $i < j$ in $\rg{n}$ with $a_i = a_j$. 
Assume $a_i = a_j = 1$ if $m \le n - 2$ and $a_i = a_j \ne 1$ otherwise.
In both cases we obtain $f(a) = f_{ij}(a) = 0$ by Lemma~\ref{lma:FN_occurrences} and $d$-nilpotence.
This proves \eqref{eq0.7_thm:4nilp_fin_rel}.

For $r \ge 1$, we construct a term that induces $f(x_1,\dotsc,x_r,\bar 1)$.
If $r = 1$, this term is $x_1^{e_1}$.
For $r \ge 2$ and $i < j$ in $\rg{r}$, we can identify two $1$'s in
\begin{equation}\label{eq1_thm:4nilp_fin_rel}
f(\bar 1, \underset{i}{x_i}, \bar 1, \underset{j}{x_j}, \bar 1)
\end{equation}
since $n \ge 6$.
Thus~\eqref{eq1_thm:4nilp_fin_rel} is a term function. 
By Lemma~\ref{lma:FN_occurrences} $x_i$ and $x_j$ each occur once in each term inducing~\eqref{eq1_thm:4nilp_fin_rel}.
Since $\alg{S}$ is noncommutative,~\eqref{eq1_thm:4nilp_fin_rel} is induced by either $x_ix_j$ or $x_jx_i$.
This motivates a relation $<$ on $\{x_1,\dotsc,x_r\}$. 
Let
\[\begin{aligned}
  x_i < x_j &\text{ if } f(\bar 1, \underset{i}{x_i}, \bar 1, \underset{j}{x_j}, \bar 1) \text{ is induced by } x_i x_j \text{,} \\
  x_j < x_i &\text{ otherwise.}
\end{aligned}\]
We show that $<$ is transitive. Assume $x_i < x_j$ and $x_j < x_k$ for $i,j,k \in \rg{r}$.
We can identify two $1$'s in the expression
\begin{equation}\label{eq1b_thm:4nilp_fin_rel}
f(\bar 1, \underset{i}{x_i}, \bar 1, \underset{j}{x_j}, \bar 1, \underset{k}{x_k}, \bar 1) \text{.}
\end{equation}
Thus~\eqref{eq1b_thm:4nilp_fin_rel} is a term function induced by some term $t$.
By Lemma~\ref{lma:FN_occurrences} $x_i$, $x_j$, and $x_k$ each occur once in $t$.
We have $t(x_i,x_j,1) = x_i x_j$ and $t(1,x_j,x_k) = x_j x_k$.
Thus $t = x_i x_j x_k$, which implies $x_i < x_k$. 
Now $<$ is a strict linear order on $x_1, \dots, x_r$.
By reordering variables we can assume that $x_1 < \dotsb < x_r$.
Now we claim that
\begin{equation}\label{eq2_thm:4nilp_fin_rel}
x_1\dotsm x_r \text{ induces } f(x_1,\dotsc,x_r,\bar 1).
\end{equation}
Fix $a := (a_1,\dotsc,a_r,\bar 1) \in S^n$.
Let $i_1 < \dotsb < i_m$ in $\rg{r}$ be the positions where $a$ is ${} \ne 1$.
If $m \ge d$, then $f(a) = 0 = a_1 \dotsm a_r$ by~\eqref{eq0.7_thm:4nilp_fin_rel} and $d$-nilpotence.
If $m < d$, then we can identify two $1$'s. 
By \imt{} there is a term $t(x_{i_1},\dotsc,x_{i_m})$ such that
\[
   \text{$t$ induces }
f(\bar{1},\underset{i_1}{x_{i_1}},\bar{1},\underset{i_2}{x_{i_2}},\dotsc,\bar 1,\underset{i_m}{x_{i_m}},\bar 1).
\]
Since $e_1 = \dotsb = e_r = 1$ and by Lemma~\ref{lma:FN_occurrences}, each of the variables $x_{i_1}, \dots, x_{i_m}$ occurs exactly once in $t$.
Now $x_{i_1} < \dotsb < x_{i_m}$ implies $t = x_{i_1} \dotsm x_{i_m}$.
Thus $f(a) = a_{i_1} \dotsm a_{i_m} = a_1 \dotsm a_r$.
This proves~\eqref{eq2_thm:4nilp_fin_rel}. 

If $d=3$, then Lemma~\ref{lma:nilp_mon_IMT-fct_form} implies
\[ f(x_1,\dotsc,x_n) = f(x_1,\dotsc,x_r,\bar 1)\,x_{r+1}^{e_{r+1}} \dotsm x_n^{e_n}, \]
which is a term function by~\eqref{eq2_thm:4nilp_fin_rel}. 

For the rest of the proof assume $d=4$.
Lemma~\ref{lma:nilp_mon_IMT-fct_form} implies
\begin{equation}\label{eq0.5_thm:4nilp_fin_rel}
f(x_1,\dotsc,x_n) = f(x_1,\dotsc,x_s,\bar 1) \,x_{s+1}^{e_{s+1}} \dotsm x_n^{e_n}. 
\end{equation}
We claim that
\begin{equation}\label{eq0.6_thm:4nilp_fin_rel}
\text{$f(x_1,\dotsc,x_s,\bar 1)$ is a term function}. 
\end{equation}

If $s \le n-2$, then~\eqref{eq0.6_thm:4nilp_fin_rel} holds since we can identify two $1$'s.
Thus assume $s \ge n-1$ for the rest of the proof.

First assume $r = 0$.
Let $t := x_1^2 \dotsm x_s^2$ and $a = (a_1,\dotsc,a_s,\bar 1) \in S^n$.
We claim that 
\begin{equation}\label{eq3.5_thm:4nilp_fin_rel}
t^\alg{S}(a) = f(a).
\end{equation}
Let $i_1 < \dotsb < i_m$ in $\rg{s}$ be the positions where $a$ is ${} \ne 1$.
If $m \ge 2$, then $f(a) = 0 = t^\alg{S}(a)$ by~\eqref{eq0.7_thm:4nilp_fin_rel} and $4$-nilpotence.
If $m = 1$, then $f(a) = a_{i_1}^2 = t^\alg{S}(a)$ since $e_{i_1} = 2$.
If $m = 0$, then $f(a) = 1 = t^\alg{S}(a)$.
This proves~\eqref{eq3.5_thm:4nilp_fin_rel} and~\eqref{eq0.6_thm:4nilp_fin_rel} for $r = 0$.

Next assume $r > 0$.
For each $i\in\rg{r}$ and $m \in \{r+1,\dotsc,s\}$ we pick a term $w_{im}(x_i,x_m)$ such that
\[
w_{im} \text{ induces } f( \bar 1, \underset{i}{x_i}, \bar 1, \underset{m}{x_m}, \bar 1 ).
\]
By Lemma~\ref{lma:FN_occurrences} $x_i$ occurs once and $x_m$ twice in $w_{im}$.
Thus
\[
w_{im} \in \{ x_i x_m^2, x_m x_i x_m, x_m^2 x_i \}.
\]
Later in the proof, we will extend the term $x_1 \dotsm x_r$ by adding two occurrences of $x_m$ for each $m \in \{r+1,\dotsc,s\}$.

Now we claim that for each $m \in \{r + 1, \dots, s\}$ there exist $\alpha < \beta$ in $\{ 0,\dotsc,r+1 \}$ such that
\begin{equation}\label{eq5.5x_thm:4nilp_fin_rel}
  w_{im} \approx \begin{cases}
  x_i x_m^2   & \text{ if } 1 \le i \le \alpha, \\
  x_m x_i x_m & \text{ if } \alpha < i < \beta, \\
  x_m^2 x_i   & \text{ if } \beta \le i \le r.
  \end{cases}
\end{equation}
We consider two cases, one where $\alg{S}$ satisfies $x y^2 \approx y^2 x$ and another where it does not.

Case~1,
\begin{equation}\label{case1_eq4_thm:4nilp_fin_rel}
  \text{$\mathbf{S}$ does not satisfy $x y^2 \approx y^2 x$.}
\end{equation}
For $i < j$ in $\rg{r}$ and $m \in \{r+1,\dotsc,s\}$ we claim that
\begin{align}
\label{case1_claim1_eq5_thm:4nilp_fin_rel}
&\text{if $w_{im} \approx x_m^2 x_i$, then $w_{jm} \approx x_m^2 x_j$,} \\
\label{case1_claim2_eq5_thm:4nilp_fin_rel}
&\text{if $w_{jm} \approx x_j x_m^2$, then $w_{im} \approx x_i x_m^2$.}
\end{align}
To prove~\eqref{case1_claim1_eq5_thm:4nilp_fin_rel} assume that $w_{im} \approx x_m^2 x_i$.
By \imt{} there is a term $v(x_i, x_j, x_m)$ such that
\[
  \text{$v$ induces $f( \bar 1, \underset{i}{x_i}, \bar 1, \underset{j}{x_j}, \bar 1, \underset{m}{x_m}, \bar 1 )$.}
\]
By the definition of $w_{im}$ and~\eqref{case1_eq4_thm:4nilp_fin_rel} we have
\begin{align}\label{foo41}
  v(x_i,1,x_m) \approx w_{im} &\approx x_m^2 x_i \not\approx x_i x_m^2 \text{.}
\end{align}
Since $x_i < x_j$ we have
\begin{equation}\label{foo42}
  v(x_i, x_j, 1) = x_i x_j \text{.}
\end{equation}
By Lemma~\ref{lma:FN_occurrences} the variables $x_i,x_j$ occur once and $x_m$ twice in $v$.
From~\eqref{foo41} and~\eqref{foo42} it follows that
\begin{equation}\label{foo43}
  v \in \{x_m x_i x_j x_m, x_m x_i x_m x_j, x_m^2 x_i x_j\} \text{.}
\end{equation}
If $v = x_m x_i x_j x_m$, then
\[
  x_m^2 x_i \approx w_{im} \approx v(x_i, 1, x_m) = x_m x_i x_m
\]
and thus $\alg{S}$ satisfies $x^2 y \approx xyx$.
Then
\[
  w_{jm} \approx v(1, x_j, x_m) = x_m x_j x_m \approx x_m^2 x_j \text{.}
\]
On the other hand, if $v$ is one of the last two terms in~\eqref{foo43}, we directly get
\[
  w_{jm} \approx v(1, x_j, x_m) = x_m^2 x_j \text{.}
\]
This proves~\eqref{case1_claim1_eq5_thm:4nilp_fin_rel}.
The proof of~\eqref{case1_claim2_eq5_thm:4nilp_fin_rel} follows from a symmetric argument.

To prove~\eqref{eq5.5x_thm:4nilp_fin_rel}, let $m \in \{r + 1, \dots, s\}$. Let $\alpha \in[r]$ be maximal such that $w_{\alpha m} \approx x_\alpha x_m^2$ and set $\alpha=0$ if no such $\alpha \in[r]$ exists. 
Then $w_{i m} \approx x_i x_m^2$ for all $i \leq \alpha$ by~\eqref{case1_claim2_eq5_thm:4nilp_fin_rel}.
Let $\beta \in[r]$ be minimal such that $w_{\beta m} \approx x_m^2 x_\beta$ and set $\beta=r+1$ if no such $\beta \in[r]$ exists.
Then $w_{i m} \approx x_m^2 x_i$ for $\beta \leq i \leq r$ by \eqref{case1_claim1_eq5_thm:4nilp_fin_rel}. Since $x y^2 \not \approx y^2 x$ in $\mathbf{S}$, we have $\alpha<\beta$ and $w_{i m} \approx x_m x_i x_m$ for $\alpha<i<\beta$.
This proves~\eqref{eq5.5x_thm:4nilp_fin_rel} for case~1.

Case~2,
\begin{equation}\label{case2_eq4_thm:4nilp_fin_rel}
  \text{$\mathbf{S}$ satisfies $x y^2 \approx y^2 x$.}
\end{equation}
For $i < j < k$ in $\rg{r}$ and $m \in \{r+1,\dotsc,s\}$ we claim that
\begin{equation}\label{foo44}
  \text{if $w_{im} \approx x_m x_i x_m$ and $w_{km} \approx x_m x_k x_m$, then $w_{jm} \approx x_m x_j x_m$.}
\end{equation}
Assume $w_{im} \approx x_m x_i x_m$ and $w_{km} \approx x_m x_k x_m$.
Since $n \ge 6$ we can identify two $1$'s in
\begin{equation}\label{foo46}
  f( \bar 1, \underset{i}{x_i}, \bar 1, \underset{j}{x_j}, \bar 1, \underset{k}{x_k}, \bar 1, \underset{m}{x_m}, \bar 1 ) \text{.}
\end{equation}
By \imt{} there is a term $v(x_i,x_j,x_k,x_m)$ which induces~\eqref{foo46}.
By Lemma~\ref{lma:FN_occurrences} the variables $x_i,x_j,x_k$ occur once and $x_m$ twice in $v$.
Since $x_i < x_j < x_k$, it follows that $x_i$ occurs before $x_j$ and $x_j$ before $x_k$ in $v$.
Observe that
\begin{equation}\label{foo45}
  \begin{aligned}
  v(x_i,x_j,x_k,1) &= x_i x_j x_k, \\
  v(x_i,1,1,x_m) &\approx w_{im} \approx x_m x_i x_m, \\
  v(1,1,x_k,x_m) &\approx w_{km} \approx x_m x_k x_m.
  \end{aligned}
\end{equation}
If $xyx \not\approx x^2y \approx yx^2$ in $\alg{S}$, then $v = x_m x_i x_j x_k x_m$.
If $xyx \approx x^2y \approx yx^2$, then the $x_m$ can be moved to any position in~\eqref{foo45}.
Both scenarios yield $v \approx x_m x_i x_j x_k x_m$.
We obtain
\[
w_{jm} \approx v(1,x_j,1,x_m) \approx x_m x_j x_m \text{,}
\]
which proves~\eqref{foo44}.

To prove~\eqref{eq5.5x_thm:4nilp_fin_rel}, let $m \in \{r + 1, \dots, s\}$.
If $w_{im} \not\approx x_m x_i x_m$ for all $i \in \rg{r}$, set $\alpha =0$ and $\beta = 1$.
Then~\eqref{case2_eq4_thm:4nilp_fin_rel} implies~\eqref{eq5.5x_thm:4nilp_fin_rel}.
If $w_{im} \approx x_m x_i x_m$ for some $i \in \rg{r}$, let $\alpha \in \{0, \dots, r-1\}$ be minimal such that $w_{\alpha+1, m} \approx x_m x_{\alpha+1} x_m$ and let $\beta \in \{2, \dots, r+1\}$ be maximal such that $w_{\beta-1, m} \approx x_m x_{\beta-1} x_m$.
Then~\eqref{foo44} yields $w_{i m} \approx x_m x_i x_m$ for $\alpha<i<\beta$.
This and~\eqref{case2_eq4_thm:4nilp_fin_rel} yield~\eqref{eq5.5x_thm:4nilp_fin_rel}.


Using~\eqref{eq5.5x_thm:4nilp_fin_rel} we construct a term $t$ that induces $f(x_1,\dotsc,x_s,\bar 1)$.
For each \( m \in \{r + 1, \dots, s\} \), we insert two occurrences of \( x_m \) into \( x_1 \dotsm x_r \) to get
\[
  x_1 \dotsm x_\alpha x_m x_{\alpha + 1} \dotsm x_{\beta - 1} x_m x_\beta \dotsm x_r \text{.}
\]
We repeat this procedure for all such $m$ and denote the final term by $t$.
Note that for different $m$ the new variables can be inserted independently from each other since all permutations of $x_{r+1}^2 \dotsm x_s^2$ are equivalent terms in $\alg{S}$ by $4$-nilpotence.
For $i \in \rg{r}$ and $m \in \{r+1,\dotsc,s\}$ we obtain
\begin{equation}\label{eq6_thm:4nilp_fin_rel}
t(\bar 1, \underset{i}{x_i}, \bar 1, \underset{m}{x_m}, \bar 1) \approx w_{im}
\end{equation}
from~\eqref{eq5.5x_thm:4nilp_fin_rel}.
For $a = (a_1,\dotsc,a_s,\bar 1) \in S^n$, we claim that 
\begin{equation}\label{eq7x_thm:4nilp_fin_rel}
t^\alg{S}(a) = f(a).
\end{equation}
Let $i_1 < \dotsb < i_\ell$ in $\rg{s}$ be the positions where $a$ is ${} \ne 1$.
If $e_{i_1} + \dotsb + e_{i_\ell} \ge 4$, then $f(a) = 0 = t^\alg{S}(a)$ by~\eqref{eq0.7_thm:4nilp_fin_rel} and $4$-nilpotence.
If $a_{r+1} = \dots = a_s = 1$, then $f(a) = a_1 \dots a_r = t^\alg{S}(a)$ by~\eqref{eq2_thm:4nilp_fin_rel}.
If $e_{i_1} + \dotsb + e_{i_\ell} < 4$ and $a_m \ne 1$ for some $m \in \{r + 1, \dots, s\}$, then $a$ is of the form 
\[ a = (\bar 1, \underset{i}{a_i}, \bar 1, \underset{m}{a_m}, \bar 1) 
\quad\text{for some $i \in \rg{r}$ and $m \in \{r+1,\dotsc,s\}$.} \]
Now~\eqref{eq6_thm:4nilp_fin_rel} yields $t^\alg{S}(a) = w_{im}^\alg{S}(a_i,a_m) = f(a)$.
This proves~\eqref{eq7x_thm:4nilp_fin_rel} and~\eqref{eq0.6_thm:4nilp_fin_rel}.
By~\eqref{eq0.5_thm:4nilp_fin_rel} $f$ is a term function.
By Lemma~\ref{lma:main_cond_fin_rel} $\alg{S}$ is finitely related with degree at most $|S| + 1$.
\end{proof}


\begin{proof}[Proof of Theorem~\ref{thm:FN5_factor_not_fr}]
The relation $\theta$ is a congruence on $\algfn_5 \{a,b\}$ since the elements in the nontrivial equivalence classes have $4$ factors and $\algfn_5 \{a,b\}$ is $5$-nilpotent.
Furthermore,
\begin{equation}\label{eq0_thm:FN5_factor_not_fr}
\text{$\alg{S}$ satisfies $(xy)^2\approx (yx)^2$ and $x^2y^2 \approx y^2x^2$.}
\end{equation}
For $n \ge 4$ we define $f \colon S^n \to S$ as follows.
Let 
\[ f(x_1,\dotsc,x_n) := 0 \quad\text{if at least three $x_i$ are ${} \ne 1$}, \]
and for $i < j$ in $\rg{n}$ let
\[
f(\bar 1, \underset{i}{x_i}, \bar 1, \underset{j}{x_j}, \bar 1) := 
\begin{cases}
(x_i x_j)^2 &\text{if } j = i+1 \text{ or } (i,j)=(1,n), \\
x_i^2 x_j^2 &\text{otherwise.}
\end{cases}
\]
First we show that 
\begin{equation}\label{eq1_thm:FN5_factor_not_fr}
f \text{ has \imt.}
\end{equation}
%
%
For $i,j \in \rg{n}$ let $\ell := (j - i \mod n) + 1$.
We define a term $g_{ij}$ of length $2 \ell$.
Let $\oplus$ and $\ominus$ be addition and subtraction modulo $n$ on the set $\rg{n}$, respectively.
Let
\[
  g_{ii} := x_i ^2.
\]
For $i \ne j$ let $g_{ij}^{(k)}$ denote the variable in the $k$-th position of $g_{ij}$ and
\[
\begin{aligned}
& g_{ij}^{(1)} := g_{ij}^{(3)} := x_i \\
& g_{ij}^{(2 k)} := g_{ij}^{(2 k + 3)} := x_{i \oplus k} && \text{ for } 0 < k < \ell - 1, \\
& g_{ij}^{(2 \ell - 2)} := g_{ij}^{(2 \ell)} := x_j.
\end{aligned}
\]
Note that if $i < j$, then $g_{ij}$ contains all variables in $\{x_i, \dots, x_j\}$ exactly twice.
Conversely, if $i > j$, then $g_{ij}$ contains all variables in $\{x_i, \dots, x_n, x_1, \dots, x_j\}$ exactly twice.
For example, for $n=5$ we have $g_{42} = x_4 x_5 x_4 x_1 x_5 x_2 x_1 x_2$.

Now let $i < j$ in $\rg{n}$ and 
\[
\begin{aligned}
h &:= \begin{cases}
g_{2,n-1} &\text{if } (i,j)=(1,n), \\
g_{j\oplus1,i\ominus1} &\text{if } j=i+1, \\
g_{j\oplus1,i\ominus1} \, g_{i+1,j-1} &\text{otherwise,}
\end{cases} \\
r &:= x_j^4 h.
\end{aligned}
\]
We claim that
\begin{equation}\label{eq2_thm:FN5_factor_not_fr}
\text{$r$ induces $f_{ij}$.}
\end{equation}
Note that $x_k$ occurs twice in $h$ for $k \in \rg{n} \setminus \{i,j\}$, whereas $x_i$ and $x_j$ do not occur. 
Fix $i < j$ in $\rg{n}$ and $c \in S^n$.

Case, $c_j \ne 1$ and $c_k = 1$ for all $k \in \rg{n}\setminus\{i,j\}$.
Then $h(c) = 1$ and $r(c) = c_j^4 = f_{ij}(c)$.

Case, $c_j \ne 1$ and $c_k \ne 1$ for some $k \in \rg{n}\setminus\{i,j\}$.
Then $h(c) \ne 1$ and $r(c) = c_j^4 \, h(c) = 0 = f_{ij}(c)$ by $5$-nilpotence.

Case, $c_j = 1$ and at least $3$ positions of $c$ among $\rg{n} \setminus \{i,j\}$ are ${} \ne 1$.
Then $h(c) = 0$ by $5$-nilpotence. Thus $r(c) = 0 = f_{ij}(c)$.

Case, $c_j = 1$ and at most $2$ positions of $c$ among $\rg{n} \setminus \{i,j\}$ are ${} \ne 1$.
Pick $k < \ell$ in $\rg{n} \setminus \{i,j\}$ such that all positions of $c$ among $\rg{n} \setminus \{i,j,k,\ell\}$ are~$1$.
Let $A := \{i+1, \dots, j-1\}$ and $B := \{1, \dots, i - 1, j + 1, \dots, n\}$.
If $k, \ell \in A$, then $h(c) = g_{i + 1, j - 1}(c)$.
If $k, \ell \in B$, then $h(c) = g_{j \oplus 1, i \ominus 1}(c)$.
If $k \in A$ and $\ell \in B$, then $g_{j \oplus 1, i \ominus 1}(c) = c_\ell^2$ and $g_{i + 1, j - 1}(c) = c_k^2$, and thus $h(c) = c_\ell^2 c_k^2$.
If $k \in B$ and $\ell \in A$, then similarly $h(c) = c_k^2 c_\ell^2$.
By the identities~\eqref{eq0_thm:FN5_factor_not_fr} we have
\begin{equation}\label{eq2.5_thm:FN5_factor_not_fr}
r(c) = h(c) =
\begin{cases}
(c_k c_\ell)^2 &\text{if } \ell=k \oplus 1 \text{ or } k=\ell \oplus 1, \\
c_k^2 c_\ell^2 &\text{otherwise.}
\end{cases}
\end{equation}
By definition,
\[
f_{ij}(c) = f(\bar 1, \underset{k}{c_k}, \bar 1, \underset{\ell}{c_\ell}, \bar 1)
\]
is also equal to ~\eqref{eq2.5_thm:FN5_factor_not_fr}.
Thus $f_{ij}(c) = r(c)$.
This proves~\eqref{eq2_thm:FN5_factor_not_fr}.
Thus every $f_{ij}$ is a term function, and~\eqref{eq1_thm:FN5_factor_not_fr} follows.

Next we claim that
\begin{equation}\label{eq3_thm:FN5_factor_not_fr}
f \text{ is not a term function.}
\end{equation}
Suppose $f$ is induced by some term $t$.
Recall that $a \in S$.
Every variable $x_i$ occurs twice in $t$ since
\[
t^\alg{S}(\bar 1, \underset{i}{a}, \bar 1) = f(\bar 1, \underset{i}{a}, \bar 1) = a^2 \notin \{a,a^3,a^4,0\}.
\]
By~\eqref{eq0_thm:FN5_factor_not_fr} and the definition of $f$, we have $t^{\alg{S}}(x_1,\dotsc,x_n) = t^{\alg{S}}(x_2,\dotsc,x_n,x_1)$.
Thus we can assume that $t$ starts with $x_2$.
By the definition of $f$, the following equivalences hold in $\alg{S}$.
\begin{align*}
\begin{aligned}
t(x_1,x_2,\bar 1) &\approx (x_1 x_2)^2, \\
t(1,x_2,x_3,\bar 1) &\approx (x_2 x_3)^2, \\
t(1,x_2,\bar 1, \underset{i}{x_i},\bar 1) &\approx x_2^2 x_i^2 &&\text{for } i \in \{4,\dotsc,n\}.
\end{aligned}
\end{align*}
Thus $x_1$ and $x_3$ each occur once between the two occurrences of $x_2$ in $t$, whereas $x_4,\dotsc,x_n$ do not occur between the two $x_2$'s.
So $t$ starts either with $x_2 x_1 x_3 x_2$ or with $x_2 x_3 x_1 x_2$.
Hence
\[
t(x_1,1,x_3,\bar 1) \in \{ (x_1 x_3)^2, (x_3 x_1)^2, x_1 x_3^2 x_1, x_3 x_1^2 x_3 \}.
\]
So $t(x_1,1,x_3,\bar 1)$ and $x_1^2 x_3^2$ cannot be equivalent.
Since $n \ge 4$, this contradicts the definition of $f$.
We proved~\eqref{eq3_thm:FN5_factor_not_fr}.
By~\eqref{eq1_thm:FN5_factor_not_fr},~\eqref{eq3_thm:FN5_factor_not_fr}, and Lemma~\ref{lma:main_cond_fin_rel}, $\alg{S}$ is not finitely related.
\end{proof}

The following was proved independently by the authors of~\cite{DJPS11} and Markovi\'c, Mar\'oti, and McKenzie in~\cite{MMM12}.

\begin{thm}[{
\cite[Theorem~2.11]{DJPS11}, \cite[Corollary~4.3]{MMM12}
}]
\label{thm:gen_same_variety}
Let $\alg{A},\alg{B}$ be finite algebras that generate the same variety. Then $\alg{A}$ is finitely related if and only if $\alg{B}$ is.
\end{thm}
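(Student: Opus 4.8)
The plan is to reduce finite relatedness to the relational (polymorphism) characterization and to show that the resulting condition is a property of the common variety $V := \mathrm{Var}(A) = \mathrm{Var}(B)$. First I would note that the statement is symmetric, so it suffices to prove that if $A$ is finitely related then so is $B$. By Birkhoff's theorem, generating the same variety means that $A$ and $B$ satisfy exactly the same identities; hence for all terms $s,t$ we have $s^A = t^A$ iff $s^B = t^B$, which yields compatible abstract-clone isomorphisms $\clo_n A \cong \mathbf{F}_V(n) \cong \clo_n B$ for every $n$, where $\mathbf{F}_V(n)$ is the $V$-free algebra on $n$ generators.

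Next I would invoke the Pol--Inv Galois connection on the finite set $A$: the clone $\clo A$ consists exactly of the operations preserving $\mathrm{Inv}(\clo A)$, the compatible relations of $A$ (the universes of subalgebras of finite powers of $A$). Since $A$ is finite there are only finitely many relations of each fixed arity, so $A$ is finitely related precisely when some finite arity bound $k$ already suffices, i.e. every operation preserving all compatible relations of arity $\le k$ is a term operation. The key reformulation is that, for an $n$-ary $f$, preserving all $\le k$-ary compatible relations is equivalent to: for every homomorphism $\varphi \colon \mathbf{F}_V(n) \to A^k$, the image $\varphi(\mathbf{F}_V(n))$ is closed under applying $f$ to the generator images $\varphi(x_1),\dots,\varphi(x_n)$ (these columns generate an arbitrary $n$-generated subalgebra of $A^k$). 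Dually, $f$ is a term operation precisely when it respects the free realization $\mathbf{F}_V(n) \cong \langle \pi_1,\dots,\pi_n\rangle \le A^{A^n}$ given by the coordinate projections. Both conditions are thus phrased through homomorphisms out of the shared free algebras $\mathbf{F}_V(m)$ into powers of $A$ (respectively $B$).

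The heart of the argument, and the step I expect to be the \emph{main obstacle}, is transferring a witness of non-finite-relatedness between the two carriers: a function that is ``$k$-determined'' (preserves all $\le k$-ary compatible relations) but is not a term operation lives on $A^n$, and since $|A| \neq |B|$ in general such set-functions do not correspond directly --- indeed this is exactly why finite relatedness is not preserved under arbitrary homomorphic images or subalgebras. The same-variety hypothesis is what repairs this: $B$ is simultaneously a finitely generated quotient of some $\mathbf{F}_V(p)$ and a subalgebra of a power of $A$ (and symmetrically with the roles of $A$ and $B$ reversed), so the homomorphism-out-of-$\mathbf{F}_V(n)$ data controlling both the $k$-determined condition and term-ness can be read off on either side at the cost of only a bounded change in the arity parameter. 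I would make this bounded change explicit to turn a degree bound for $A$ into a (possibly larger, but still finite) degree bound for $B$, and then conclude by Lemma~\ref{lma:main_cond_fin_rel} that $B$ is finitely related. Alternatively, one may quote the characterization of finite relatedness purely in terms of $V$ established in the cited works and observe that it is manifestly symmetric in $A$ and $B$.
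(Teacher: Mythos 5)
The paper does not prove this statement; it imports it from \cite{DJPS11} and \cite{MMM12}, so the comparison can only be between your sketch and the arguments in those sources. Your proposal has a genuine gap exactly where you yourself flag it: transferring the condition ``every operation preserving all compatible relations of arity $\le k$ is a term operation'' from $A$ to $B$ \emph{is} the content of the theorem, and you neither carry out the ``bounded change in the arity parameter'' nor justify that it exists. The Pol--Inv framing alone cannot close this, because the invariant relations of $A$ and of $B$ live on different base sets with no canonical correspondence; this is precisely why finite relatedness fails to pass to subalgebras and homomorphic images (Theorems~\ref{thm:FN5_factor_not_fr} and~\ref{thm:fin_rel_propagation_sg} of this paper), so an argument that never isolates what ``generating the same variety'' buys beyond $B \in HSP(A)$ cannot succeed. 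There is also a factual slip: from $\vg A = \vg B$ you get that $B$ is a homomorphic image of a subalgebra of a finite power of $A$, not, as you write, a subalgebra of a power of $A$; the ``H'' cannot be dropped. Finally, your closing alternative (``quote the characterization \dots established in the cited works'') is circular as a proof of the very theorem being cited.

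The argument behind both citations works with identification minors rather than with relations. Fix $n > \max(|A|,|B|)$. If $f\colon A^n \to A$ has \imt{}, then since $n > |A|$ every tuple $a \in A^n$ has a repeated entry, so $f(a) = f_{ij}(a)$ for any $i<j$ with $a_i = a_j$; hence $f$ is completely determined by the family $(f_{ij})_{i<j}$ of its minors, each of which is a term function. Conversely, a family of term functions $(g_{ij})_{i<j}$ (each independent of its $i$-th argument) arises from such an $f$ if and only if it satisfies the compatibility condition $g_{ij}(a) = g_{k\ell}(a)$ whenever $a_i = a_j$ and $a_k = a_\ell$. Both this condition and the condition that the family is induced by a single term are assertions about identities between terms, i.e.\ about elements of the free algebras $\mathbf{F}_V(m)$ and the canonical maps between them given by identifying generators; an identity holds in $A$ iff it holds in $V$ iff it holds in $B$. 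Therefore, for every $n > \max(|A|,|B|)$, all $n$-ary \imt{} functions on $A$ are term functions if and only if the same holds for $B$, and Lemma~\ref{lma:main_cond_fin_rel} finishes the proof. If you want to salvage your approach, this is the ``characterization of finite relatedness purely in terms of $V$'' that you would need to establish rather than quote.
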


\begin{lma}\label{lma:fin_rel_propagation}
For a variety $V$, the following two conditions are equivalent:
\begin{enumerate}
\item\label{it1_lma:fin_rel_propagation}
$\Forall \alg{A}, \alg{B} \in V_\textnormal{fin}$: if $\alg{A} \times \alg{B}$ is finitely related (f.r.), then $\alg{A}$ is f.r.
\item\label{it2_lma:fin_rel_propagation}
$\Forall \alg{A} \in V_\textnormal{fin}$: if $\alg{A}$ is f.r., then every homomorphic image of $\alg{A}$ is f.r.
\end{enumerate}
These conditions imply the following:
\begin{enumerate}
\setcounter{enumi}{2}
\item\label{it3_lma:fin_rel_propagation}
$\Forall \alg{A} \in V_\textnormal{fin}$: if $\alg{A}$ is f.r., then every subalgebra of $\alg{A}$ is f.r.
\end{enumerate}
\end{lma}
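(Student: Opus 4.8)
The plan is to prove \ref{it1_lma:fin_rel_propagation}~$\Rightarrow$~\ref{it2_lma:fin_rel_propagation} and \ref{it2_lma:fin_rel_propagation}~$\Rightarrow$~\ref{it1_lma:fin_rel_propagation}, and then deduce \ref{it3_lma:fin_rel_propagation} from \ref{it2_lma:fin_rel_propagation}; since the first two conditions are thereby equivalent, this establishes the lemma. The workhorse throughout is Theorem~\ref{thm:gen_same_variety}: two finite algebras generating the same variety are simultaneously finitely related or not. The recurring observation I would isolate first is that if $\alg B$ lies in the variety generated by $\alg A$ -- in particular if $\alg B$ is a homomorphic image or a subalgebra of $\alg A$ -- then $\alg A\times\alg B$ generates the same variety as $\alg A$. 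Indeed, $\alg A$ is the image of $\alg A\times\alg B$ under the first projection, so the variety generated by $\alg A$ is contained in the one generated by $\alg A\times\alg B$; conversely, by Birkhoff's HSP theorem the variety generated by $\alg A$ contains both $\alg A$ and $\alg B$ and is closed under products, hence contains $\alg A\times\alg B$. Combined with Theorem~\ref{thm:gen_same_variety}, this yields the tool I will use repeatedly: whenever $\alg A$ is finitely related and $\alg B$ lies in the variety generated by $\alg A$, the product $\alg A\times\alg B$ is finitely related.

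For \ref{it2_lma:fin_rel_propagation}~$\Rightarrow$~\ref{it1_lma:fin_rel_propagation}, suppose $\alg A\times\alg B$ is finitely related with $\alg A,\alg B\in V_\textnormal{fin}$. Then $\alg A\times\alg B\in V_\textnormal{fin}$ as well, and $\alg A$ is its homomorphic image under the first projection, so \ref{it2_lma:fin_rel_propagation} gives that $\alg A$ is finitely related. For \ref{it1_lma:fin_rel_propagation}~$\Rightarrow$~\ref{it2_lma:fin_rel_propagation}, let $\alg A\in V_\textnormal{fin}$ be finitely related and let $\alg B$ be a homomorphic image of $\alg A$, so that $\alg B\in V_\textnormal{fin}$. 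By the observation above $\alg A\times\alg B$, and hence its isomorphic copy $\alg B\times\alg A$, is finitely related; applying \ref{it1_lma:fin_rel_propagation} with $\alg B$ in the role of the first factor (finite relatedness being an isomorphism invariant) shows that $\alg B$ is finitely related.

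Finally, for \ref{it2_lma:fin_rel_propagation}~$\Rightarrow$~\ref{it3_lma:fin_rel_propagation}, let $\alg A\in V_\textnormal{fin}$ be finitely related and let $\alg B$ be a subalgebra of $\alg A$. The key is to realize $\alg B$ as a homomorphic image of a finitely related algebra. Since $\alg B$ is a subalgebra of $\alg A$, it lies in the variety generated by $\alg A$, so the observation above makes $\alg A\times\alg B$ finitely related; and the second projection $\alg A\times\alg B\to\alg B$ is a surjective homomorphism, since each $b\in B$ is the image of $(b,b)$. Thus $\alg B$ is a homomorphic image of the finitely related member $\alg A\times\alg B$ of $V_\textnormal{fin}$, and \ref{it2_lma:fin_rel_propagation} concludes the proof. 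The only genuinely nontrivial ingredient is Theorem~\ref{thm:gen_same_variety}; granting it, the step I would be most careful with is this last one, where presenting an arbitrary subalgebra as a projection image of a product that still generates the original variety is exactly what lets the homomorphic-image hypothesis reach subalgebras.
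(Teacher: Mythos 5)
Your proof is correct and follows essentially the same route as the paper: the key step in both is that $\alg A$ and $\alg A\times\alg B$ generate the same variety whenever $\alg B$ is a homomorphic image (or subalgebra) of $\alg A$, so Theorem~\ref{thm:gen_same_variety} transfers finite relatedness to the product, after which the relevant condition is applied. The only cosmetic difference is that you derive \ref{it3_lma:fin_rel_propagation} from \ref{it2_lma:fin_rel_propagation} via the second projection, whereas the paper derives it from \ref{it1_lma:fin_rel_propagation} "similarly"; since the two conditions are equivalent, this is immaterial.
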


\begin{proof}
Clearly~\enumref{it2_lma:fin_rel_propagation} implies~\enumref{it1_lma:fin_rel_propagation}.
Assume~\enumref{it1_lma:fin_rel_propagation} holds. 
Let $\alg{B}$ be a homomorphic image of $\alg{A}$. Then $\alg{A}$ and $\alg{A} \times \alg{B}$ generate the same variety. 
By Theorem~\ref{thm:gen_same_variety} $\alg{A} \times \alg{B}$ is also finitely related.
By~\enumref{it1_lma:fin_rel_propagation} $\alg{B}$ is finitely related.
This proves~\enumref{it2_lma:fin_rel_propagation}.
The fact that~\enumref{it1_lma:fin_rel_propagation} implies~\enumref{it3_lma:fin_rel_propagation} is proved similarly.
\end{proof}

For a semigroup $\alg{S}$ let $\alg{S} \sqcup \{0\}$ denote the semigroup obtained when we adjoin a (possibly additional) zero element, where we extend the multiplication by $s \cdot 0 := 0 \cdot s := 0 \cdot 0 := 0$ for all $s \in S$.

\begin{thm}[{\cite[Theorem~4.1]{May13}}]\label{thm:adj_zero_fin_rel}
A finite semigroup $\alg{S}$ is finitely related if and only if $\alg{S} \sqcup \{0\}$ is finitely related.
\end{thm}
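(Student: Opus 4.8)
The plan is to use the degree characterisation of Lemma~\ref{lma:main_cond_fin_rel} together with the elementary observation that a term $t(x_1,\dots,x_n)$, being a nonempty word, induces tightly linked functions on $S$ and on $T := S \sqcup \{0\}$: the function $t^{T}$ restricts to $t^{S}$ on $S^n$, while $t^{T}(a)=0$ as soon as some coordinate of $a$ equals $0$. I would prove the two implications separately.

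For the direction ``$S$ finitely related $\Rightarrow$ $T$ finitely related'' I expect a clean argument. Let $d$ be the degree of $S$, fix $n > \max(|T|,d)$, and let $f\colon T^n \to T$ have \imt. First, if $a \in T^n$ has a zero coordinate then, since $n > |T|$, two coordinates of $a$ coincide, say $a_i = a_j$; hence $f(a) = f_{ij}(a) = t^{T}(a) = 0$, where $t$ induces $f_{ij}$. Next, the same pigeonhole argument (now using $n > |S|$) shows $f$ maps $S^n$ into $S$, and each identification minor of $f\!\restriction_{S^n}$ is the restriction to $S^n$ of a minor of $f$, hence a term function of $S$; so $f\!\restriction_{S^n}$ has \imt{} on $S$ and, as $n > d$, equals $t_0^{S}$ for some term $t_0$. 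Combining the two facts gives $f = t_0^{T}$ on all of $T^n$, so $f$ is a term function and $T$ has degree at most $\max(|T|,d)$.

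For the converse ``$T$ finitely related $\Rightarrow$ $S$ finitely related'' I would argue relationally rather than through \imt. Write $\clo T$ as the functions preserving finitely many relations $R_1,\dots,R_m$, and propose the traces $R_i \cap S^{k_i}$ as a defining system for $\clo S$. One inclusion is immediate. For the other, given $g\colon S^n \to S$ preserving every $R_i \cap S^{k_i}$, extend it to $\hat g\colon T^n \to T$ by $\hat g(a) := g(a)$ for $a \in S^n$ and $\hat g(a) := 0$ otherwise. Since $\hat g = t^{T}$ holds exactly when $g = t^{S}$, it suffices to show that $\hat g$ preserves each $R_i$; then $\hat g \in \clo T$ and $g = \hat g\!\restriction_{S^n} \in \clo S$.

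The main obstacle is precisely this last step: verifying that the zero-extension $\hat g$ preserves a relation $R \subseteq T^k$. Applying $\hat g$ column-wise to rows $r^{(1)},\dots,r^{(n)} \in R$ yields a tuple $o$ that is $0$ on the set $Z$ of columns containing a zero and equals $g$ of the column elsewhere. Here I would exploit that $S$ is a subsemigroup and $\{0\}$ an ideal of $T$: the pattern $Z$ is realised inside $R$ itself (e.g.\ the idempotent power of the product of all rows is $0$ exactly on $Z$), while on the columns outside $Z$ the inputs lie in $S$, where the hypothesis that $g$ preserves $R \cap S^k$ applies. The delicate point is to reconcile the two halves---masking the $Z$-columns to $0$ without disturbing the values already computed on the good columns---so as to exhibit $o$ as a genuine element of $R$, and this is where the absorbing behaviour of the adjoined zero must be used in full. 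It is also why I would avoid the \imt{} route for this direction: an identification minor of $\hat g$ is the zero-extension of a minor $g_{ij}$ and need not be a term function of $T$ when $g_{ij}$ fails to depend on all of its variables, so the naive attempt to show $\hat g$ has \imt{} breaks down and the relational formulation is what lets one bypass it.
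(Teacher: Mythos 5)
First, a point of reference: the paper does not prove this statement at all --- it is quoted from \cite[Theorem~4.1]{May13} --- so your proposal has to stand on its own, and as written it does not.

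In the forward direction your key intermediate claim, that $f(a)=0$ whenever some coordinate of $a$ is $0$, is false for a general $f$ with \imt{}: take $f$ to be the projection $(x_1,\dotsc,x_n)\mapsto x_1$, which has \imt{}, and $a=(s,0,\dotsc,0)$. The flaw is in the justification ``$f(a)=f_{ij}(a)=t^T(a)=0$'': the term $t$ inducing $f_{ij}$ need not contain the variable sitting at the zero coordinate, and then $t^T(a)$ need not be $0$. Repairing this is exactly what Lemma~\ref{lma:depend_on_each_var} and Lemma~\ref{lma:imt_sg_sl} are for: one first reduces to the case that $f$ depends on all variables, notes that $S\sqcup\{0\}$ contains the $2$-element semilattice $\{e,0\}$ for any idempotent $e\in S$, and then, for $a_k=0$, chooses $i<j$ in $\rg{n}\setminus\{k\}$ with $a_i=a_j$ (this needs $n>|T|+1$, not just $n>|T|$) so that $f_{ij}$ provably depends on $x_k$ and every inducing term contains $x_k$. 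The same omission resurfaces at the end of your argument: from $f\!\restriction_{S^n}=t_0^S$ you cannot conclude $f=t_0^T$ unless $t_0$ contains \emph{every} variable, which again does not come for free (e.g.\ $f\!\restriction_{S^n}$ may fail to depend on variables that $f$ depends on only through $0$). None of these points is cosmetic; they are the substance of the forward direction.

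The converse direction is not a proof but a plan, and its central step is exactly where it stops. You need the zero-extension $\hat g$ to preserve an arbitrary relation $R$ from a defining system for $\clo T$, and your proposed mechanism --- multiplying by the idempotent power $r^\omega$ of the product of the rows to realise the zero pattern $Z$ --- does not produce the required tuple: conjugating by $r^\omega$ kills the $Z$-columns but replaces the entries $g(r^{(1)}_c,\dotsc,r^{(n)}_c)$ on the remaining columns by $e_c\,g(\dotsb)\,e_c$ for idempotents $e_c\in S$, which is a different element in general. Moreover, the hypothesis that $g$ preserves $R\cap S^k$ says nothing about the \emph{projections} of $R$ onto the columns outside $Z$, which is what the good columns actually present you with; it is therefore doubtful that the traces $R_i\cap S^{k_i}$ form a defining system for $\clo S$ at all. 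Since this very paper shows that finite relatedness does not in general pass to subsemigroups, the converse cannot be won by a soft restriction argument; some genuinely new idea (as in Mayr's proof) is required, and your write-up does not supply it.
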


Let $\alg{S}^0$ denote the semigroup
\[
\alg{S}^0 := \begin{cases}
\alg{S} \sqcup \{0\} &\text{if $\alg{S}$ has no zero element,} \\
\alg{S} &\text{otherwise,}
\end{cases}
\]
For two semigroups $\alg{S}$ and $\alg{T}$ with $S^0 \cap T^0 = \{0\}$, we define the \emph{$0$-direct union} $\alg{S} \cupz \alg{T}$ to be the semigroup with universe $S^0 \cup T^0$ and multiplication
\[
x \cdot y := \begin{cases}
x \cdot y &\text{if } x, y \in S^0 \text{ or } x,y \in T^0, \\
0 &\text{otherwise.}
\end{cases}
\]
The variety generated by an algebra $\alg{A}$ is denoted by $\vg \alg{A}$.

\begin{lma}\label{sg_vars_gen}
For finite semigroups $\alg{S}$ and $\alg{T}$ we have
$
\vg (\alg{S} \times \alg{T})^0 = \vg \alg{S}^0 \times \alg{T}^0  = \vg \alg{S} \cupz \alg{T}.
$
\end{lma}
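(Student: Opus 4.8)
The plan is to show that the three semigroups generate the same variety by exhibiting, for each pair, enough concrete membership relations that the closure operators of Birkhoff's HSP theorem (homomorphic images, subalgebras, direct products) force $\vg$ of each to contain the others. Throughout I abbreviate $P := (S \times T)^0$, $Q := S^0 \times T^0$, and $U := S \cupz T$. Since each of these algebras is finite, it suffices to produce embeddings, surjections, and products witnessing that $P, Q, U$ lie in one another's generated varieties; the equalities then follow.

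First I would establish $\vg Q = \vg U$. For $\vg U \subseteq \vg Q$, I define $\phi \colon U \to Q$ by $x \mapsto (x,0)$ for $x \in S^0$ and $y \mapsto (0,y)$ for $y \in T^0$. The two rules agree on the common zero, and checking the three kinds of products---both arguments in $S^0$, both in $T^0$, or one in each (where the $\cupz$-product is $0$ and $\phi$ sends it to $(0,0)$)---shows $\phi$ is an injective homomorphism, so $U$ embeds into $Q$. For the reverse inclusion I note that $S^0$ and $T^0$ are subsemigroups of $U$, whence $Q = S^0 \times T^0$ is a subsemigroup of $U \times U$ and so $Q \in \vg U$.

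Next I would establish $\vg P = \vg Q$. For $\vg P \subseteq \vg Q$, observe that $S \times T$ sits inside $Q$ (as $S \subseteq S^0$ and $T \subseteq T^0$), and adjoining the element $(0,0)$ of $Q$ yields a subsemigroup isomorphic to $P$: if $S \times T$ already has a zero then $(0,0)$ lies in it and the subsemigroup is $S \times T = P$, and otherwise $(0,0)$ is a genuinely new zero, again producing $(S \times T)^0 = P$; thus $P$ embeds into $Q$. For $\vg Q \subseteq \vg P$, the two coordinate projections extend to surjective homomorphisms $P \to S^0$ and $P \to T^0$ (each sending the adjoined zero to the zero of the target). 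Hence $S^0, T^0 \in \vg P$, and since a variety is closed under direct products, $Q = S^0 \times T^0 \in \vg P$.

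The argument is a short sequence of routine verifications once the right maps are written down, and the three inclusions above yield $\vg P = \vg Q = \vg U$. The only genuinely fiddly point is keeping the bookkeeping of zeros straight, since $S^0$, $T^0$, and $(S \times T)^0$ may or may not adjoin a fresh zero depending on whether $S$, $T$, or $S \times T$ already possesses one. I expect the subsemigroup identification $P \hookrightarrow Q$ to require the small case split noted above (whether both $S$ and $T$ have zeros), but this is the main---and essentially the only---obstacle, and it resolves to the same conclusion in either case.
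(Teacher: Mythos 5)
Your proof is correct and follows essentially the same route as the paper: exhibit HSP witnesses among $(S\times T)^0$, $S^0\times T^0$, and $S\cupz T$, using the same embedding $S\cupz T\hookrightarrow S^0\times T^0$ and the same subalgebra arguments. The only (harmless) deviation is that for $\vg S^0\times T^0\subseteq\vg (S\times T)^0$ you use the coordinate projections as surjective homomorphisms, whereas the paper embeds $S^0$ and $T^0$ into $(S\times T)^0$ via an idempotent of the other factor; your variant even avoids the appeal to finiteness at that step.
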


\begin{proof}
Since $\alg{S}$ and $\alg{T}$ are finite, both have an idempotent element.
For the first equality observe that both $\alg{S}^0$ and $\alg{T}^0$ embed into $(\alg{S} \times \alg{T})^0$.
Conversely $(\alg{S} \times \alg{T})^0$ is a subsemigroup of $\alg{S}^0 \times \alg{T}^0$.

We show the second equality. Both $\alg{S}^0$ and $\alg{T}^0$ are subsemigroups of $\alg{S} \cupz \alg{T}$.
Conversely $\alg{S} \cupz \alg{T}$ embeds into $\alg{S}^0 \times \alg{T}^0$ via $S \to S \times \{0\}$, $T \to \{0\} \times T$, and $0 \mapsto (0,0)$.
\end{proof}

\begin{lma}\label{lma:zerodirect_union_fin_rel}
For finite semigroups $\alg{S}$ and $\alg{T}$, $\alg{S} \times \alg{T}$ is finitely related if and only if $\alg{S} \cupz \alg{T}$ is finitely related.
\end{lma}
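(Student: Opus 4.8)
The plan is to reduce both sides of the equivalence to a single semigroup, namely $(S \times T)^0$, and then chain two equivalences supplied by the results already established.

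First I would record the following auxiliary observation: for every finite semigroup $U$, $U$ is finitely related if and only if $U^0$ is. If $U$ already has a zero element, then $U^0 = U$ by definition and there is nothing to prove. Otherwise $U^0 = U \sqcup \{0\}$, and Theorem~\ref{thm:adj_zero_fin_rel} gives exactly that $U$ is finitely related iff $U \sqcup \{0\}$ is. Applying this with $U := S \times T$ shows that $S \times T$ is finitely related if and only if $(S \times T)^0$ is.

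Second, I would invoke Lemma~\ref{sg_vars_gen}, which yields $\vg (S \times T)^0 = \vg S \cupz T$, so that $(S \times T)^0$ and $S \cupz T$ generate the same variety. By Theorem~\ref{thm:gen_same_variety} one of them is finitely related precisely when the other is. Combining this with the equivalence from the previous paragraph gives that $S \times T$ is finitely related if and only if $S \cupz T$ is, which is the claim.

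I do not expect a genuine obstacle here, since the argument is a direct chaining of the cited results. The only point that requires care is the case distinction in the auxiliary observation on whether $S \times T$ has a zero element (equivalently, whether both $S$ and $T$ do), so that the definition of $(-)^0$ and the hypothesis of Theorem~\ref{thm:adj_zero_fin_rel} are correctly matched; once that is handled, everything else is routine.
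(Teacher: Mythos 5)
Your proposal is correct and follows essentially the same route as the paper: reduce to $(S\times T)^0$ via Theorem~\ref{thm:adj_zero_fin_rel}, then conclude from Lemma~\ref{sg_vars_gen} and Theorem~\ref{thm:gen_same_variety}. The extra care you take with the case distinction in the definition of $(-)^0$ is a point the paper leaves implicit, but there is no substantive difference.
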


\begin{proof}
By Theorem~\ref{thm:adj_zero_fin_rel} $\alg{S} \times \alg{T}$ is finitely related if and only if $(\alg{S} \times \alg{T})^0$ is finitely related. Now the result is immediate from Theorem~\ref{thm:gen_same_variety} and Lemma~\ref{sg_vars_gen}.
\end{proof}


\begin{proof}[Proof of Theorem~\ref{thm:fin_rel_propagation_sg}]
By Lemma~\ref{lma:fin_rel_propagation} it suffices to show that~\enumref{it3_thm:fin_rel_propagation_sg} implies~\enumref{it1_thm:fin_rel_propagation_sg} and that \enumref{it4_thm:fin_rel_propagation_sg} implies~\enumref{it3_thm:fin_rel_propagation_sg}.
First assume~\enumref{it3_thm:fin_rel_propagation_sg} holds and $\alg{S} \times \alg{T}$ is finitely related.
Since $\alg{T}$ is finite, it contains an idempotent element $e$. Thus $\alg{S} \cong \alg{S} \times \{e\}$ embeds into $\alg{S} \times \alg{T}$. 
So $\alg{S}$ is finitely related.

Now assume~\enumref{it4_thm:fin_rel_propagation_sg} holds. Let $\alg{T}$ be a subsemigroup of a finitely related semigroup $\alg{S}$.
Since $\alg{S}$ and $\alg{S} \times \alg{T}$ generate the same variety, $\alg{S} \times \alg{T}$ is also finitely related by Theorem~\ref{thm:gen_same_variety}.
By Lemma~\ref{lma:zerodirect_union_fin_rel} $\alg{S} \cupz \alg{T}$ is finitely related.
Now $\alg{S}^0$ is an ideal of $\alg{S} \cupz \alg{T}$. The corresponding Rees quotient is isomorphic to $\alg{T}^0$ and finitely related by~\enumref{it4_thm:fin_rel_propagation_sg}.
By Theorem~\ref{thm:adj_zero_fin_rel} $\alg{T}$ is finitely related.
\end{proof}

\begin{lma}\label{lma:examples}
Let $\alg{S} := (\algfn_5 \{a,b\} \mathbin{/}\theta)^1$ be as in Theorem~\ref{thm:FN5_factor_not_fr} and $\alg{T} \cong (\algfn_5 \{a, b\})^{1}$ with $S$ and $T$ disjoint.
Then
\begin{enumerate}
  \item $\alg{S} \cupz \alg{T}$ is finitely related.
  \item $\alg{S}$ is a subsemigroup and a Rees quotient of $\alg{S} \cupz \alg{T}$ but not finitely related.
\end{enumerate}
\end{lma}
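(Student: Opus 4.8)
The plan is to assemble this final lemma directly from results already established in the excerpt, since both pieces are essentially corollaries of the machinery built up in Theorems~\ref{thm:FN1_finitely_related},~\ref{thm:FN5_factor_not_fr}, and~\ref{thm:fin_rel_propagation_sg} together with Lemma~\ref{lma:zerodirect_union_fin_rel}. First I would record the basic facts about the two ingredient semigroups: $T \cong (\fn_5\{a,b\})^1$ is finitely related by Theorem~\ref{thm:FN1_finitely_related}, and $S = (\fn_5\{a,b\}/\theta)^1$ is \emph{not} finitely related by Theorem~\ref{thm:FN5_factor_not_fr}. These are the inputs to everything that follows.

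For part (1), the key observation is that $S$ is a homomorphic image of $T$ (via the quotient by $\theta$, carried along with the adjoined identity). Hence $T$ and $T \times S$ generate the same variety, so by Theorem~\ref{thm:gen_same_variety} the product $T \times S$ is finitely related. Then Lemma~\ref{lma:zerodirect_union_fin_rel} translates this into the statement that the $0$-direct union $S \cupz T$ is finitely related. I would write this as a short chain: $T$ f.r. $\Rightarrow T \times S$ f.r. $\Rightarrow S \cupz T$ f.r. Note one must check that the hypothesis $S^0 \cap T^0 = \{0\}$ needed to form $S \cupz T$ is met, which follows from the disjointness assumption in the statement.

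For part (2), I would exhibit $S$ both as a subsemigroup and as a Rees quotient of $S \cupz T$. As a subsemigroup this is immediate: by the construction of the $0$-direct union, $S^0$ is a subsemigroup of $S \cupz T$, and since $S$ already contains the adjoined identity of the monoid it has a zero only if the nilpotent part does, so I would phrase things in terms of $S^0$ and then appeal to Theorem~\ref{thm:adj_zero_fin_rel} to pass between $S$ and $S^0$. As a Rees quotient, the argument mirrors the proof that~\ref{it4_thm:fin_rel_propagation_sg} implies~\ref{it3_thm:fin_rel_propagation_sg} in Theorem~\ref{thm:fin_rel_propagation_sg}: $T^0$ is an ideal of $S \cupz T$, and the corresponding Rees quotient is isomorphic to $S^0$, which is finitely related iff $S$ is by Theorem~\ref{thm:adj_zero_fin_rel}. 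That $S$ itself is not finitely related is just Theorem~\ref{thm:FN5_factor_not_fr} restated.

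The only genuine subtlety, and the step I would be most careful about, is bookkeeping around the adjoined identities and zeros: the objects $S$, $T$ are monoids (each carries a $1$), whereas the $0$-direct union and Rees-quotient machinery are phrased for semigroups and involve adjoining a zero via $S^0$. I would make sure to state precisely which object is a subsemigroup or quotient of $S \cupz T$ ($S^0$ rather than $S$ literally, with Theorem~\ref{thm:adj_zero_fin_rel} bridging the two) and to confirm that identifying the ideal $T^0$ does indeed yield $S^0$ up to isomorphism. Everything else is a routine citation of the earlier results, so the lemma should follow in a few lines once the identifications are set up correctly.
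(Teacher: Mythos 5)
Your proposal is correct and follows essentially the same route as the paper: both rest on $S$ being a homomorphic image of $T$, Theorem~\ref{thm:gen_same_variety} together with Lemma~\ref{sg_vars_gen} (equivalently Lemma~\ref{lma:zerodirect_union_fin_rel}) for part (1), and the observation that $T$ is an ideal of $S \cupz T$ with Rees quotient $S$ for part (2). Your care about $S^0$ versus $S$ is harmless but unnecessary here, since both $S$ and $T$ already contain a zero, so $S^0 = S$ and $T^0 = T$.
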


\begin{proof}
By Lemma~\ref{sg_vars_gen} $\alg{T}$ and $\alg{S} \cupz \alg{T}$ generate the same variety. 
Thus $\alg{S} \cupz \alg{T}$ is finitely related by Theorem~\ref{thm:FN1_finitely_related} and Theorem~\ref{thm:gen_same_variety}.
Since $\alg{T}$ forms an ideal in $\alg{S} \cupz \alg{T}$ we have $\alg{S} \cong (\alg{S} \cupz \alg{T}) \mathbin{/} \alg{T}$.
By Theorem~\ref{thm:FN5_factor_not_fr} $\alg{S}$ is not finitely related.
\end{proof}

\end{document}